\numberwithin{equation}{section}
\theoremstyle{plain}
\newtheorem{theorem}{Theorem}[section]
\newtheorem{lemma}[theorem]{Lemma}
\newtheorem{proposition}[theorem]{Proposition}
\newtheorem{corollary}[theorem]{Corollary}
\theoremstyle{definition}
\newtheorem{definition}[theorem]{Definition}
\newtheorem{remark}[theorem]{Remark}
\newtheorem{condition}[theorem]{Condition}
\newtheorem{example}[theorem]{Example}
\newtheorem*{notation}{Notation}
\newcommand{\bx}{{\bf x}}
\newcommand{\by}{{\bf y}}
\newcommand{\ba}{{\bf a}}
\newcommand{\byrec}{\by([\bi:\bj])}
\newcommand{\bs}{{\bf s}}
\newcommand{\bt}{{\bf t}}
\newcommand{\bi}{{\bf i}}
\newcommand{\bj}{{\bf j}}
\newcommand{\bbm}{{\bf m}}
\newcommand{\bn}{{\bf n}}
\newcommand{\bp}{{\bf p}}
\newcommand{\br}{{\bf r}}
\newcommand{\btheta}{{\bm \theta}}
\newcommand{\XRF}{(X(\bt): \bt \in \Z^k)}
\newcommand{\YRF}{(Y(\bt): \bt \in \Z^k)}
\newcommand{\BX}{{\bf X}}
\newcommand{\BXrec}{\BX([\bi:\bj])}
\newcommand{\BXRF}{(\BX(\bt): \bt \in \Z^k )}
\newcommand{\BY}{{\bf Y}}
\newcommand{\BYRF}{(\BY(\bt):\bt\in \Z^k)}
\newcommand{\BZ}{{\bf Z}}
\newcommand{\BT}{{\bf T}}
\newcommand{\BTheta}{{\bf {\Theta}}}
\newcommand{\BThetarec}{\BTheta([\bi:\bj])}
\newcommand{\BThetaRF}{(\BTheta(\bt):\bt \in \Z^k)}
\newcommand{\BThetaRS}{\BTheta^{\text{RS}}}
\newcommand{\BThetaRSRF}{(\BTheta^{\text{RS}}(\bt):\bt \in \Z^k)}
\newcommand{\1}{\mathbbm{1}}
\newcommand{\0}{\mathbf{0}}
\newcommand{\ones}{\mathbf{1}}
\newcommand{\one}{\mathbf{1}}
\newcommand{\binfty}{\bm{\infty}}
\newcommand{\R}{\mathbb{R}}
\newcommand{\Rbar}{\overline{\R}}
\newcommand{\Z}{\mathbb{Z}}
\newcommand{\N}{\mathbb{N}}
\newcommand{\bbS}{\mathbb{S}}
\newcommand{\rec}{\mathcal{R}}
\newcommand{\calI}{\mathcal{I}}
\newcommand{\calBI}{\bm{\mathcal{I}}}
\newcommand{\Prob}{\mathbb{P}}
\newcommand{\E}{\mathbb{E}}
\newcommand{\calL}{\mathcal{L}}
\newcommand{\vague}{\stackrel{v}{\rightarrow}}
\newcommand{\fBm}{\mathrm{fBm}}
\newcommand{\bma}{\begin{matrix*}[r]}
\newcommand{\ema}{\end{matrix*}}
\newcommand{\eid}{\stackrel{d}{=}}
\newcommand{\vep}{\varepsilon}
\begin{document}

\title[]
{Regularly Varying Random Fields}

\author{Lifan Wu}
\address{School of Operations Research and Information Engineering\\
	Cornell University \\
	Ithaca, NY 14853}
\email{lw529@cornell.edu}

\author{Gennady Samorodnitsky}
\address{School of Operations Research and Information Engineering\\
and Department of Statistical Science \\
Cornell University \\
Ithaca, NY 14853}
\email{gs18@cornell.edu}

\thanks{This research was partially supported by the ARO grant W911NF-12-10385 at Cornell 
	University.}
\subjclass{Primary 60G70, 91B72. Secondary 62E20. }
\keywords{regular variation, random field, tail field, spectral field,
 extremal index, Brown-Resnick random field
\vspace{.5ex}}

\begin{abstract}
We study the extremes of multivariate regularly varying random
fields. The crucial tools in our study are the tail field and the
spectral field, notions that extend the  tail and spectral processes
of \cite{basrak:segers:2009}. The spatial context requires multiple
notions of extremal index, and the tail and spectral fields are
applied to clarify these notions and other aspects of extremal
clusters. An important application of the techniques we develop is
to the  Brown-Resnick random fields.  
\end{abstract}

\maketitle

\section{Introduction}

An $\R^d$-valued random
 vector $\BX$ is said to have a multivariate regularly varying
 distribution with exponent $\alpha>0$ if there exists
 a regularly varying with exponent $\alpha$ function $V:\R_{+} \to
 \R_{+}$, and a nonzero Radon measure $\mu$ 
 on $(\Rbar)^d \backslash \{\0\} = [-\infty,\infty]^d\backslash\{\0\}$
 that does not charge infinite points, such that
\begin{align}
\frac{\Prob(x^{-1}\BX \in \cdot)}{V(x)} \vague \mu(\cdot) \label{eq:rv_def}
\end{align}
(vaguely) as $x \to \infty$. The limiting measure $\mu$ is called the
tail measure of $\BX$ and it possesses the scaling property 
$\mu(uA)=u^{-\alpha}\mu(A)$ for any $\alpha>0$ and a measurable set $A
\subset  
(\Rbar)^d\backslash \{\0\}$; see
e.g. \cite{resnick:1987,resnick:2007}. It is usual to say simply that
$\BX$ is regularly varying. 

Infinite-dimensional notions of regular variation are more
complicated, but they have been developed as well. The notion of 
regularly varying stochastic process with sample paths in
$\mathbb{D}([0,1])$ was introduced in \cite{hult:lindskog:2005},  and
it was extended to random fields with sample paths in
$\mathbb{D}([0,1]^d)$ in \cite{davis:mikosch:2008}. 

When stationarity
is present, each observation of the stochastic process is equally
likely to be an extreme, and it is of interest to 
determine how these extremes cluster or, in other words, how these
extremes differ from the extremes of i.i.d. observations with the same
marginal distributions. The extremal index of a stationary process,
introduced by 
\cite{leadbetter:1983},   measures the sizes of extremal clusters. 
 Under the additional assumption of multivariate regular variation,
 \cite{davis:mikosch:2009} introduced the extremogram to capture the
 dependence of the extremes in a stationary regularly varying 
 stochastic process. In order to describe the extremal dependence of
 an entire stochastic process, an unpublished work of
 \cite{owada:samorodnitsky:2014} introduced the notion of a tail
 measure for a regularly varying stochastic process, and better
 known notions are those of the tail and spectral processes developed
 by \cite{basrak:segers:2009}.  

Extending some  of these notions to random fields is challenging due to
the lack of natural order in the time domain. \cite{choi:2002} proved
the existence of a spatial extremal index under the coordinate-wise
mixing condition introduced by \cite{leadbetter:rootzen:1998}, while 
\cite{ferreira:pereira:2008} proposed a way to compute it. 
Recently, \cite{cho:davis:ghosh:2016} formulated the notion of an
extremogram for random fields. In this paper, we extend the theory of
the tail and spectral processes of \cite{basrak:segers:2009} 
 to $\R^d$-valued regularly varying random fields with parameter space
 $\Z^k$. At the same time and independently, a part of this extension was also 
done in \cite{basrak:plannic:2018}, but the goals of that paper are
different. We will mention the similarities in the sequel.

The structure of this paper is as follows. In Section \ref{sec:tail}
we introduce the notion of the tail field corresponding to a
stationary regularly
varying random field. Its properties are studied Section
\ref{sec:spectral}, where the notion of the spectral field is also
introduced. These two notions are analogous to the notions of the tail
and spectral processes of \cite{basrak:segers:2009}. A general
discussion of the possible notions of the spatial extremal index is in
Section \ref{sec:extind}. The point process description of the
extremal clusters is extended from the case of one-dimensional time to
random fields in Section \ref{sec:pp}.  An application to
Brown-Resnick random fields is in Section \ref{sec:brf}. 

\begin{notation}
As usual, letters such as $X$, stand for random variables, while bold
letters, such as $\BX$, stand for random vectors.  Similarly, $\bi$
and $i$ stand for the indices in $\Z^k$ and $\Z$, correspondingly. We use
the notation $\N_0$ for $\N\cup \{0\}$.  For a pair of indices $\bi$
and $\bj$, we say that $\bi \le \bj$ if $i_\ell \le j_\ell$ for all
$\ell = 1,\dots,k$, in which  case $\BXrec$ is the random vector
$(\BX(\bt):\bi \le \bt \le \bj)$. The hypercubes $[(-\bn+ \ones )
:\bn - \ones ]$ and $[\0 : \bn - \ones]$ are denoted by $\rec_\bn$ and
$\rec^+_\bn$, respectively.   

For a random field $\BXRF$ and a
finite set $A \subset \Z^k$ we write $M_X(A)$ for $\max_{\bt \in A}
\|\BX(\bt)\|$. Also, we write $\0$ and $\ones$ for
the vectors of all 0's and 1's, respectively. Finally,  all the vector
operations in this paper are performed element-wise.  
\end{notation}

\medskip
\section{The Tail Field} \label{sec:tail}
Let $\BXRF$ be an $\R^d$-valued random field. It is said to be jointly
regularly  varying if the random vector
$(\BX(\bt_1),\dots,\BX(\bt_n))$ is regularly  varying in $\R^{nd}$ for
any  $\bt_1,\dots,\bt_n\in \Z^k$. The following result is an extension
of Theorem 2.1 in \cite{basrak:segers:2009} to random fields. We will
see that only a partial extension is possible. 

\begin{theorem}\label{thm:tailfield}
An  $\R^d$-valued stationary random field $\BXRF$ is jointly regularly
varying with  index $\alpha>0$ if and only if there exists a random
field $\BYRF$  such that
	\begin{align}\label{eq:lawY}
	\calL\left(x^{-1} \BX(\bt):\bt \in \Z^k \Big| \|\BX(\0)\|>x \right) \to \calL \BYRF
	\end{align}
	as $x \to \infty$ in the sense of convergence of the
        finite-dimensional distributions, and $\Prob(\|\BY(\0)\|>y) = y^{-\alpha}$ for $y \ge 1$. 
\end{theorem}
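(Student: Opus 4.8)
The plan is to prove both implications by adapting Theorem 2.1 of \cite{basrak:segers:2009} from the linearly ordered index set $\Z$ to $\Z^k$. The only genuinely spatial feature is the absence of a canonical order, but this will enter only as a bookkeeping device on finite index windows, so I fix once and for all an arbitrary linear order $\prec$ on $\Z^k$ (say lexicographic), to be used solely to organize a ``first exceedance'' decomposition. Throughout I take the normalization to be $V(x)=\Prob(\|\BX(\0)\|>x)$, and for a finite window $F\subset\Z^k$ with $\0\in F$ I write $\BX_F=(\BX(\bt):\bt\in F)$.

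\emph{Forward direction (joint regular variation $\Rightarrow$ tail field).} Fix such an $F$; then $\BX_F$ is regularly varying in $\R^{|F|d}$ with some tail measure $\mu_F$. Applying \eqref{eq:rv_def} to the set $\{z:\|z_{\0}\|>1\}$, where $z_{\0}$ is the block indexed by $\0$, which is bounded away from the origin, gives $\Prob(\|\BX(\0)\|>x)/V(x)\to\mu_F(\{z:\|z_{\0}\|>1\})\in(0,\infty)$; positivity holds because $\BX(\0)$ is itself regularly varying. Then for every $\mu_F$-continuity set $B$,
\begin{equation*}
\Prob\big(x^{-1}\BX_F \in B \,\big|\, \|\BX(\0)\|>x\big)
= \frac{\Prob(x^{-1}\BX_F\in B,\ \|\BX(\0)\|>x)/V(x)}{\Prob(\|\BX(\0)\|>x)/V(x)}
\longrightarrow \frac{\mu_F\big(B\cap\{\|z_{\0}\|>1\}\big)}{\mu_F(\{\|z_{\0}\|>1\})}.
\end{equation*}
The right-hand side is a probability measure $P_F$, and the family $(P_F)$ is consistent under the coordinate projections that retain the $\0$-block, since the $\mu_F$ are. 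Kolmogorov's extension theorem then yields a field $\BYRF$ with these finite-dimensional laws, which is \eqref{eq:lawY}. Taking $B=\{z:\|z_{\0}\|>y\}=y\{z:\|z_{\0}\|>1\}$ and using the scaling $\mu_F(uA)=u^{-\alpha}\mu_F(A)$ gives $\Prob(\|\BY(\0)\|>y)=y^{-\alpha}$ for $y\ge 1$.

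\emph{Backward direction (tail field $\Rightarrow$ joint regular variation).} Applying \eqref{eq:lawY} with the single index $\0$ and reading off the radial coordinate shows $\Prob(\|\BX(\0)\|>yx)/\Prob(\|\BX(\0)\|>x)\to y^{-\alpha}$, so $V$ is regularly varying, and by stationarity $\Prob(\|\BX(\bt)\|>x)=V(x)$ for every $\bt$. Now fix $F=\{\bt_1\prec\cdots\prec\bt_n\}$ and a set $A$ bounded away from $\0$, say $\max_j\|w_j\|\ge\delta$ on $A$. For $\epsilon\in(0,\delta)$, partition according to the $\prec$-first index $j$ with $\|\BX(\bt_j)\|>\epsilon x$, a disjoint and (on $\{x^{-1}\BX_F\in A\}$) exhaustive decomposition:
\begin{equation*}
\Prob(x^{-1}\BX_F\in A) = \sum_{j=1}^n \Prob\big(\|\BX(\bt_j)\|>\epsilon x\big)\,
\Prob\big(x^{-1}\BX_F\in A,\ \|\BX(\bt_i)\|\le \epsilon x\ \forall i\prec j \,\big|\, \|\BX(\bt_j)\|>\epsilon x\big).
\end{equation*}
In the $j$-th factor I rescale by $\epsilon x$ and use stationarity to recenter the conditioning at $\bt_j$; by \eqref{eq:lawY} shifted by $\bt_j$, the conditional law of $(\epsilon x)^{-1}\BX_F$ converges to that of $(\BY(\bt_i-\bt_j):i=1,\dots,n)$, while $\Prob(\|\BX(\bt_j)\|>\epsilon x)/V(x)\to\epsilon^{-\alpha}$. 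Dividing by $V(x)$ and passing to the limit term by term shows that $\Prob(x^{-1}\BX_F\in A)/V(x)$ converges and identifies the limit $\nu_F(A)$. Carried out over a convergence-determining class of sets $A$, this yields regular variation of $\BX_F$ with tail measure $\nu_F$, whose scaling property follows from the regular variation of $V$.

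\emph{Main obstacle.} The delicate points lie entirely in the backward direction. The term-by-term passage to the limit requires the truncation sets $\{\|w_i\|\le1\}$ and $A$ to have boundaries that are null for the limiting tail-field law; I handle this by restricting to a convergence-determining class of rectangles and choosing $\epsilon$ outside the at most countable set of atom-levels of the $\|\BY(\bt_i-\bt_j)\|$. One must also check that $\nu_F(A)$ does not depend on the chosen $\epsilon$, which is automatic because the quantity $\Prob(x^{-1}\BX_F\in A)/V(x)$ being computed does not. Finally, the arbitrariness of $\prec$ is immaterial for the same reason: the total probability being decomposed makes no reference to it, so any fixed order produces the same $\nu_F$.
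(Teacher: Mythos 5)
Your forward direction is essentially the paper's argument: restrict the tail measure of each finite window containing $\0$ to $\{\|z_{\0}\|>1\}$, check consistency of the resulting probability laws, and invoke Kolmogorov's extension theorem. The backward direction, however, takes a genuinely different route. The paper uses no decomposition at all: for each fixed site $\bt$ in the window it observes that, on the set $\{\|\bx(\bt)\|>1\}$, the normalized measure $\Prob(x^{-1}\BXrec\in\cdot)/\Prob(\|\BX(\0)\|>x)$ is \emph{exactly} the conditional law given $\|\BX(\bt)\|>x$ (by stationarity of the normalization), which converges to the law of $\BY([\bi-\bt:\bj-\bt])$ by \eqref{eq:lawY}; patching these per-site limits together identifies the vague limit with no auxiliary order, no level $\epsilon$, and essentially no continuity-set issues. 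You instead import the first-exceedance decomposition that Basrak and Segers reserve for their harder implication (recovering regular variation from half-line convergence), with an auxiliary invariant order $\prec$ and a reduced level $\epsilon x$. This works here precisely because your hypothesis is the full \eqref{eq:lawY} over $\Z^k$: your decomposition needs the tail field at all lags $\bt_i-\bt_j$, both ``before'' and ``after'' $\0$ in the order, which is exactly the information Example \ref{ex:counterex} shows cannot be recovered from a single orthant. What your version buys is an explicit series formula for the tail measure $\nu_F$ in terms of the law of $\BY$; the cost is the extra bookkeeping, plus one technical slip worth fixing: varying $\epsilon$ takes care of the boundary of $\epsilon^{-1}A$, but not of the boundary $\{\|w_i\|=1\}$ of the truncation sets, which sits at level $1$ after rescaling by $\epsilon x$ for every $\epsilon$ (the truncation and conditioning levels must coincide for your events to partition $\{\exists j:\|\BX(\bt_j)\|>\epsilon x\}$). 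That boundary is nevertheless null, since \eqref{eq:lawY} already forces $\|\BY(\0)\|$ to be Pareto and independent of $\BY(\cdot)/\|\BY(\0)\|$, whence $\|\BY(\bs)\|$ has no atoms except possibly at $0$; you should justify the step this way (or by a monotone sandwich of the truncation level) rather than by the choice of $\epsilon$.
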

Extending the terminology of \cite{basrak:segers:2009}, we call the
limiting random field  $\BYRF$ {\it the tail field} of the stationary
field $\BXRF$. 

\begin{proof}[Proof of Theorem \ref{thm:tailfield}] The argument is
  similar to the case of the one-dimensional time. Suppose first that
  $\BXRF$ is jointly regularly varying. 
 Then for arbitrary index pairs $\bi \le \bj$, $\BXrec$ is a regularly
 varying vector with index $\alpha$, By stationarity, the function
 $V$ in \eqref{eq:rv_def} can be chosen to be $V(x) =
 \Prob(\|\BX(\0)\|>x)$ regardless of $\bi,\bj$, so there exists a Radon measure $\mu_{\bi,\bj}$ on $\bigl(\Rbar^d\bigr)^{\prod_{\ell = 1}^k(j_\ell-i_\ell+1)}\backslash \{\0\}$ such that 
	\begin{align}\label{eq:mu}
	\frac{1}{\Prob(\|\BX(\0)\|>x)} \Prob\left(x^{-1} \BXrec \in \cdot\right) \vague \mu_{\bi,\bj}(\cdot)
	\end{align}
as $x\to \infty$. The restriction  $\nu_{\bi,\bj}$ to the set
$\{\byrec \mid \|\by(\0)\|>1 \}$ is, by definition, a probability
measure, and the collection of the probability measures $\bigl(
\nu_{\bi,\bj}\bigr)$ is, clearly, consistent, in the sense that, if
$\bi_2\leq\bi_1\leq \bj_1\leq \bj_2$ then the measure
$\nu_{\bi_1,\bj_1}$ is obtained from the measure
$\nu_{\bi_2,\bj_2}$ by integrating out the redundant dimensions. By
the Kolmogorov extension theorem there is a random field $\BYRF$ whose
finite-dimensional distributions are determined by the family $\bigl(
\nu_{\bi,\bj}\bigr)$. Then \eqref{eq:lawY} follows from
\eqref{eq:mu}, and the Pareto distribution of $\|\BY(\0)\|$ follows as
in the case of the one-dimensional time. 
 
In the opposite direction, suppose that \eqref{eq:lawY} holds for all
$\bi \le \bj \in  \Z^k$, and $\Prob(\|\BY(\0)\|>y) = y^{-\alpha}$. As
in the case of the one-dimensional time, for $y \ge 1$ we have 
\begin{align*}
	\frac{\Prob(\|\BX(\0)\|>xy)}{\Prob(\|\BX(\0)\|>x)}  = \Prob(\|x^{-1}\BX(\0)\|>y \,\vert\, \|\BX(\0)\|>x) \to \Prob(\|\BY(\0)\|>y) = y^{-\alpha}
	\end{align*}
as $x\to\infty$, so that  $\|\BX(\0)\|$ is a regularly varying variable with index
$\alpha$. We need to show  that for any $\bi \le \bj \in \Z^k$,
	\begin{align*}
	\frac{1}{\Prob(\|\BX(\0)\|>x)} \Prob\left(x^{-1}\BXrec \in \cdot \right)
	\end{align*}
converges vaguely as $x \to \infty$, 
and by the already established
regular variation of $\|\BX(\0)\|$, it is enough to establish weak
convergence on the set of vectors for which the norm of $\BX(\bt)$ is at least 1 for some fixed  $\bi \le \bt \le \bj$. We
will, in fact, show weak convergence to the law of the random vector 
$\BY([\bi -\bt:\bj - \bt])$. Indeed, on the relevant
set,  by  stationarity, 
	\begin{align*}
	&\frac{1}{\Prob(\|\BX(\0)\|>x)} \Prob\left(x^{-1}\BXrec \in \cdot \right) \\
	=&\frac{1}{\Prob(\|\BX(\0)\|>x)} \Prob\left(x^{-1}\BXrec \in \cdot \,,\|x^{-1}\BX(\bt)\| > 1\right)\\
	=& \Prob\left(x^{-1}\BXrec \in \cdot \mid\|x^{-1}\BX(\bt)\| > 1\right)\\
=&\Prob\left(x^{-1}\BX([\bi -\bt:\bj - \bt])\in \,\cdot\mid\|\BX(\0)\| > x\right)\\
	\to&\Prob\left(\BY([\bi -\bt:\bj - \bt]) \in \,\cdot\right)
\end{align*}
as $x\to \infty$, as required.   
\end{proof}

\begin{remark}
A similar statement is in Theorem 3.1 of \cite{basrak:plannic:2018}.
When the time is one-dimensional, \cite{basrak:segers:2009} proved that
the weak convergence on the set of nonnegative times, 
	\begin{align} \label{e:nonneg.hf}
	\calL\left(x^{-1}\BX(t):\, t\in\N_0 \Big|\|\BX(0)\|>x\right) \to
          \calL\left(\BY(t):\, t\in\N_0\right)\,,
	\end{align}
sufficed to guarantee the joint regular variation of the original
process. Interestingly, the obvious analogue of this statement for
random fields is false, as the following example of a scalar-valued
random field with 2-dimensional time illustrates.  
\end{remark}

\begin{example} \label{ex:counterex}
Let $ (Z_1,Z_2)$ be a random vector such that $ (Z_1,Z_2)\eid
(Z_2,Z_1)$, $Z_1$ is regularly varying with index $\alpha$, but 
the random vector $(Z_1,Z_2)$ itself is not regularly varying. For completeness, we will construct an example of
such a vector below. 

Let
$ \bigl( Z_1^{(j)},Z_2^{(j)} \bigr)$, 
$j\in\Z$,   be iid copies of $(Z_1,Z_2)$. 	
We define a scalar-valued random field $(X(\bt):\bt \in \Z^2)$ by letting 
	\[X(\bt) = \begin{cases}
	Z_1^{(t_1+t_2)}, &\mbox{ if } t_1 \mbox{ is odd}\\
	Z_2^{(t_1+t_2)}, &\mbox{ if } t_1 \mbox{ is even}\\
	\end{cases}. \]
It is clearly stationary. We claim that 
\begin{equation} \label{e:first.q}
\calL\left(x^{-1} X(\bt):\bt \in \N_0^2 \Big| |X(\0)|>x \right) \to 
\calL\left(Y(\bt):\bt \in \N_0^2 \right)
\end{equation} 
as $x\to\infty$, where $Y(\0)$ has the Pareto$(\alpha)$ distribution,
and $Y(\bt)=0$ for each $\bt\not=\0$. Indeed, since $\0$ is the
only point in $\N_0^2$ on the line $t_1+t_2=0$, $X(\bt)$ is
independent of $X(\0)$ for each $\bt\in
\N_0^2\setminus\{\0\}$. Therefore, for any such $\bt$ we have 
$\calL(x^{-1}X(\bt)| X(\0) > x) \to \delta_\0$ as $x \to
\infty$. Therefore, \eqref{e:first.q}   follows since 
$X(\0)$ is regularly varying with index $\alpha$ because of the assumed
regular variation of $Z_1$.    Therefore,  \eqref{e:first.q} holds. Note that
the latter is the obvious analogue of \eqref{e:nonneg.hf} for a random
field. 

However, the random field $(X(\bt):\bt \in \Z^2)$ is not regularly
varying. To see this, note that with 
$$
 \bt_1=\left(\bma 0\\0\ema\right), \ \bt_2=
 \left( \bma-1\\1\ema \right) 
$$
we have $\bigl( X(\bt_1),X(\bt_2)\bigr)\eid (Z_1,Z_2)$, which, by the
assumption, is not regularly varying. 

It remains to construct a random vector $ (Z_1,Z_2)$ such that $ (Z_1,Z_2)\eid
(Z_2,Z_1)$, $Z_1$ is regularly varying with index $\alpha$, but 
the random vector $(Z_1,Z_2)$ itself is not regularly varying. Let
$a_n=n!, \, n=1,2,\ldots$. Let $Z\geq 1$ have the standard
Pareto$(\alpha)$ distribution. If $Z\in [a_{2n-1},a_{2n})$ for some
$n=1,2,\ldots$, set $Z_1=Z_2=Z$. If $Z\in [a_{2n},a_{2n+1})$ for some
$n=1,2,\ldots$, take $Z_1$ and $Z_2$ be standard
Pareto$(\alpha)$ random variables conditioned on being in the interval 
$[a_{2n},a_{2n+1})$ but otherwise independent. Formally, for any
two-dimensional Borel set $A$,
\begin{align*}
\Prob\bigl( (Z_1,Z_2)\in A\bigr) &=
\sum_{n=1}^\infty \int_{a_{2n-1}}^{a_{2n}} \1\bigl( (z,z)\in
                               A\bigr)az^{-(\alpha+1)}\, dz \\
                                         &+
\sum_{n=1}^\infty \frac{1}{a_{2n}^{-\alpha}-a_{2n+1}^{-\alpha}} 
\int_{a_{2n}}^{a_{2n+1}} \int_{a_{2n}}^{a_{2n+1}} \1\bigl( (z_1,z_2)\in
                               A\bigr)
                                           az_1^{-(\alpha+1)}az_2^{-(\alpha+1)}\,
                                           dz_1\, dz_2\,.
\end{align*}
By construction, $ (Z_1,Z_2)\eid (Z_2,Z_1)$, and each coordinate of
the random vector has the standard Pareto$(\alpha)$ distribution. It
remains to show that the random vector $(Z_1,Z_2)$  is not regularly
varying. Note that
\begin{align*}
\Prob\left( a_{2n-1}^{-1}(Z_1,Z_2)\in (1,2]\times
  (1,2]\right)
\sim \Prob\left( Z\in (a_{2n-1},2a_{2n-1}]\right) \sim
  (1-2^{-\alpha})a_{2n-1}^{-\alpha} 
\end{align*}
as $n\to\infty$. On the other hand,
\begin{align*}
\Prob\left( a_{2n}^{-1}(Z_1,Z_2)\in (1,2]\times
  (1, 2]\right) &\leq \frac{1}{a_{2n}^{-\alpha}-a_{2n+1}^{-\alpha}} 
\left( \int_{a_{2n}}^{2a_{2n}} az^{-(\alpha+1)}\, dz\right)^2 \\
&+\Prob\left(Z\geq a_{2n+1}\right)\sim (1-2^{-\alpha})^2a_{2n}^{-\alpha}
\end{align*}
as $n\to\infty$. Therefore, \eqref{eq:rv_def} cannot hold.

\end{example}

\medskip
\section{Properties of the tail field} \label{sec:spectral}
This section describes the properties of the tail field introduced in
the previous section. These are similar, but not identical, to the
properties of the tail process. In particular, we introduce an object 
parallel to that of the spectral process of \cite{basrak:segers:2009},
which we call the spectral field. The latter is defined as
$\BTheta(\bt) = \BY(\bt) / \| \BY(\0)\|$, $\bt\in \Z^k$, where $\BYRF$
is the tail field of an $\R^d$-valued stationary random field $\BXRF$
that is jointly regularly varying with index $\alpha>0$.  As in the
one-dimensional case, it is easy to check that 
\begin{equation} \label{e:decomp}
\text{the spectral field is independent of} \ \| \BY(\0)\|\,.
\end{equation}

The following proposition can be proved in the same way as for the
one-dimensional time, so we do not include the proof. See also Theorem
3.1 in \cite{basrak:plannic:2018}. Note,
however, that a part of Corollary 3.2 in \cite{basrak:segers:2009}
fails in the case of random fields; see Example \ref{ex:counterex}. 
\begin{proposition} \label{pr:decomp}
	Let $\BXRF$ be an $\R^d$-valued stationary random field, and $\|\BX(\0)\|$ be a regularly varying variable with index $\alpha$ for some $\alpha \in (0,\infty)$. Then $\BXRF$ is jointly regularly varying with index $\alpha$ if and only if there exists a random field $\BThetaRF$ such that
	\begin{align}\label{eq:lawtheta}
	\calL\left(\frac{\BX(\bt)}{\|\BX(\0)\|} : \bt \in \Z^k\,\middle|\,  \|\BX(\0)\|>x \right) \to \calL\BThetaRF
	\end{align}
	as $x \to \infty$.
\end{proposition}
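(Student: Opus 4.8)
The plan is to follow the one-dimensional template of \cite{basrak:segers:2009}, exploiting the polar decomposition $\BX(\bt)=\|\BX(\0)\|\cdot\bigl(\BX(\bt)/\|\BX(\0)\|\bigr)$ together with the fact that, under the conditioning $\|\BX(\0)\|>x$, the radial factor $\|\BX(\0)\|/x$ and the angular field $\BX(\bt)/\|\BX(\0)\|$ decouple in the limit. The dimension $k$ of the parameter space plays no role here, which is why the argument transfers essentially verbatim; the obstruction of Example \ref{ex:counterex} concerned only the reduction to nonnegative indices and does not arise, since the full index set $\Z^k$ is retained throughout. All convergences below are in the sense of finite-dimensional distributions, so I fix once and for all a finite set of indices containing $\0$.

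For the forward implication, assume $\BXRF$ is jointly regularly varying with index $\alpha$. By Theorem \ref{thm:tailfield} the tail field $\BYRF$ exists and $\|\BY(\0)\|\ge 1$ almost surely. I would set $\BTheta(\bt)=\BY(\bt)/\|\BY(\0)\|$ and argue by the continuous mapping theorem. Conditionally on $\|\BX(\0)\|>x$ one has $\BX(\bt)/\|\BX(\0)\|=\bigl(x^{-1}\BX(\bt)\bigr)/\|x^{-1}\BX(\0)\|$, and the map $\by\mapsto\by/\|\by(\0)\|$ is continuous on $\{\|\by(\0)\|>0\}$, a set of full mass under the limiting law of $x^{-1}\BX$ because $\|\BY(\0)\|\ge 1$. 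Applying this map to the convergence supplied by Theorem \ref{thm:tailfield} yields \eqref{eq:lawtheta}.

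For the reverse implication, assume \eqref{eq:lawtheta} and the regular variation of $\|\BX(\0)\|$. The goal is to reconstruct the tail-field convergence and invoke Theorem \ref{thm:tailfield}. The heart of the matter is the joint convergence, conditionally on $\|\BX(\0)\|>x$, of $\bigl(\|\BX(\0)\|/x,\ \BX(\bt)/\|\BX(\0)\|\bigr)$ to $(Y_0,\BTheta(\bt))$, where $Y_0$ is $\mathrm{Pareto}(\alpha)$ and independent of $\BTheta$. I would establish the product structure on a generating class: for $y\ge 1$ and a continuity set $A$ of the limiting angular law,
\begin{align*}
\Prob\left(\frac{\|\BX(\0)\|}{x}>y,\ \frac{\BX(\bt)}{\|\BX(\0)\|}\in A \,\middle|\, \|\BX(\0)\|>x\right)
=\frac{\Prob(\|\BX(\0)\|>xy)}{\Prob(\|\BX(\0)\|>x)}\cdot\Prob\left(\frac{\BX(\bt)}{\|\BX(\0)\|}\in A \,\middle|\, \|\BX(\0)\|>xy\right).
\end{align*}
As $x\to\infty$ the first factor tends to $y^{-\alpha}$ by the regular variation of $\|\BX(\0)\|$, while the second tends to $\Prob(\BTheta\in A)$ by \eqref{eq:lawtheta}, since the inflated threshold $xy$ also diverges. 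The product $y^{-\alpha}\Prob(\BTheta\in A)$ is precisely the independent form, identifying the joint limit as the law of $(Y_0,\BTheta)$ with independent components and $Y_0$ Pareto.

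Finally, since the radial factor always exceeds $1$ under the conditioning, these events determine the joint limit, and the continuous mapping theorem applied to multiplication gives that $x^{-1}\BX(\bt)=(\|\BX(\0)\|/x)\cdot(\BX(\bt)/\|\BX(\0)\|)$ converges conditionally to $Y_0\,\BTheta(\bt)=:\BY(\bt)$. Because $\|\BTheta(\0)\|=1$, the norm $\|\BY(\0)\|=Y_0$ is Pareto$(\alpha)$, so Theorem \ref{thm:tailfield} delivers joint regular variation. I expect the asymptotic-independence step to be the main obstacle: one must justify that conditioning at the inflated threshold $xy$ still yields the same spectral limit $\BTheta$, and that factorization over the generating events genuinely pins down the joint limiting law rather than merely its values on that class. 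Both points are handled by the regular variation of the radial part together with a routine convergence-determining argument, exactly as in the one-dimensional setting.
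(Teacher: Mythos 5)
Your proof is correct and follows exactly the route the paper intends: the paper omits the proof, stating that it "can be proved in the same way as for the one-dimensional time," i.e.\ by the Basrak--Segers argument you reproduce (continuous mapping via $\by\mapsto\by/\|\by(\0)\|$ for the forward direction, and the radial/angular factorization at the inflated threshold $xy$ plus reconstruction of $\BY=Y_0\BTheta$ for the converse). Your remark that the counterexample obstruction concerns only the restriction to nonnegative indices, and so is irrelevant here, is also consistent with the paper's own discussion.
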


Even though neither the tail field nor the spectral field is
generally stationary, the stationarity of the original random field
$\BXRF$ makes itself felt in the former fields. In particular, it 
leads to a ``change-of-time'' property for these fields. A
similar result in the case of one-dimensional time is a part of
Theorem 3.1 in \cite{basrak:segers:2009}.  We present this property in
a somewhat more general form. 
\begin{theorem} \label{thm:shift}
Let $\BYRF$ be the tail field corresponding to an
$\R^d$-valued stationary random field $\BXRF$ 
that is jointly regularly varying with index $\alpha>0$, and let
$\BThetaRF$ be the corresponding spectral field.  Let $g:
\bigl(\R^d)^{\Z^k} \to \R$ be a bounded measurable function. Take
any $\bs \in \Z^k$. Then the following identities hold: 
\begin{align}
		\E\bigl[ g(\BY(\cdot-\bs))\1\bigl(\BY(-\bs)\not=\0\bigr)\bigr] &= \int_0^\infty
                \E[g(r\BTheta(\cdot))\1(r\|\BTheta(\bs)\|>1)]\,d(-r^{-\alpha})\,, 
\label{e:shift.1a} \\ 
		\E\bigl[g(\BTheta(\cdot-\bs)) \1\bigl(\BTheta(-\bs)\not=\0\bigr)\bigr] &= 
\E\left[g\left(\frac{\BTheta(\cdot)}{\|\BTheta(\bs)\|}\right)
\|\BTheta(\bs)\|^{\alpha}\right]\,. \label{e:shift.2a} 
\end{align}		
 \end{theorem}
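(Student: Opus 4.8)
The plan is to organize everything around the \emph{tail measure} of the field. For a finite window $[\bi:\bj]$ the regular variation \eqref{eq:mu} produces an $\alpha$-homogeneous Radon measure $\mu_{\bi,\bj}$, normalized so that $\mu_{\bi,\bj}(\{\|\by(\0)\|>1\})=1$; by consistency these assemble into a single measure $\mu$ on $(\R^d)^{\Z^k}$ whose restriction to $\{\|\by(\0)\|>1\}$ is exactly the law of the tail field $\BYRF$. I would record its two structural properties. First, regular variation gives the scaling $\mu(u\,\cdot)=u^{-\alpha}\mu(\cdot)$. Second, stationarity of $\BXRF$ forces $\mu$ to be invariant under the shift $S_\bs$ defined by $(S_\bs\by)(\bt)=\by(\bt+\bs)$: indeed $\Prob(x^{-1}\BX\in\cdot)$ is $S_\bs$-invariant for every $x$, so the limit $\mu$ is as well, and in particular $\mu(\{\|\by(\bs)\|>1\})=1$ for all $\bs$. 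Since $g$ is only required to be bounded and measurable, I would first prove both identities for $g$ depending on finitely many coordinates---where all manipulations take place inside one $\mu_{\bi,\bj}$---and then pass to general $g$ by a monotone-class argument, noting that each side of \eqref{e:shift.1a} and of \eqref{e:shift.2a} is the integral of $g$ against a finite (sub-probability) measure on $(\R^d)^{\Z^k}$, so agreement on cylinder functions yields agreement for all bounded measurable $g$.

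For \eqref{e:shift.1a}, I would first rewrite the left-hand side as an integral against $\mu$,
\[\E\bigl[g(\BY(\cdot-\bs))\1(\BY(-\bs)\neq\0)\bigr]=\int g(\by(\cdot-\bs))\,\1(\by(-\bs)\neq\0)\,\1(\|\by(\0)\|>1)\,\mu(d\by),\]
and then apply the change of variable $\by\mapsto S_\bs\by$. Under this substitution $g(\by(\cdot-\bs))$ becomes $g(\by)$, the factor $\1(\by(-\bs)\neq\0)$ becomes $\1(\by(\0)\neq\0)$, and $\1(\|\by(\0)\|>1)$ becomes $\1(\|\by(\bs)\|>1)$, so shift-invariance turns the integral into $\int g(\by)\1(\by(\0)\neq\0)\1(\|\by(\bs)\|>1)\,\mu(d\by)$. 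On the set $\{\by(\0)\neq\0\}$ I would introduce polar coordinates $\by=r\mathbf{w}$ with $r=\|\by(\0)\|$ and $\mathbf{w}=\by/\|\by(\0)\|$; the scaling property together with the independence \eqref{e:decomp} gives $\mu(dr,d\mathbf{w})=\alpha r^{-\alpha-1}\,dr\,\Prob(\BTheta\in d\mathbf{w})$, because the $\mathbf{w}$-marginal of $\mu$ on $\{\|\by(\0)\|>1\}$ is the law of the spectral field. Integrating then yields $\int_0^\infty\E[g(r\BTheta(\cdot))\1(r\|\BTheta(\bs)\|>1)]\,\alpha r^{-\alpha-1}\,dr$, which is the right-hand side since $d(-r^{-\alpha})=\alpha r^{-\alpha-1}\,dr$.

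The proof of \eqref{e:shift.2a} follows the same two moves. Writing $\BTheta(\cdot-\bs)=\BY(\cdot-\bs)/\|\BY(\0)\|$ and $\1(\BTheta(-\bs)\neq\0)=\1(\BY(-\bs)\neq\0)$, the same substitution $\by\mapsto S_\bs\by$ converts the left-hand side into $\int g(\by/\|\by(\bs)\|)\,\1(\by(\0)\neq\0)\,\1(\|\by(\bs)\|>1)\,\mu(d\by)$; here the argument $\by/\|\by(\bs)\|=\mathbf{w}/\|\mathbf{w}(\bs)\|$ is scale-free, so after the polar decomposition the radial integral decouples and $\int_0^\infty\1(r\|\mathbf{w}(\bs)\|>1)\,\alpha r^{-\alpha-1}\,dr=\|\mathbf{w}(\bs)\|^\alpha$, producing the weight $\|\BTheta(\bs)\|^\alpha$ and hence the right-hand side of \eqref{e:shift.2a}.

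The routine change of variables is not where the difficulty lies. The main obstacle I anticipate is the measure-theoretic bookkeeping needed to make the global tail measure $\mu$ and its shift-invariance rigorous---the finite-dimensional measures $\mu_{\bi,\bj}$ are what \eqref{eq:mu} actually supplies, the shift $S_\bs$ moves one window to another, and the passage from cylinder functions to arbitrary bounded measurable $g$ must be justified through the finite-measure remark above. A secondary point to handle with care is the behaviour on $\{\by(\0)=\0\}$: the indicators $\1(\by(\0)\neq\0)$ are exactly what restricts the computation to the region where the polar decomposition is valid, and one must check that the degenerate case $\|\mathbf{w}(\bs)\|=0$, where both the indicator and the weight $\|\mathbf{w}(\bs)\|^\alpha$ vanish, is treated consistently.
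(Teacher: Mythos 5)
Your argument is correct, but it is organized differently from the paper's. The paper stays entirely at the level of finite-dimensional distributions: it proves \eqref{e:shift.1a} first for bounded \emph{continuous} cylinder functions with an extra cutoff $\1(\|\BY(-\bs)\|>\vep)$, obtaining that identity as the limit of pre-asymptotic identities via \eqref{eq:lawY} and stationarity (citing the one-dimensional argument of Basrak and Segers), then removes the cutoff by monotone convergence and drops continuity because both sides are integrals against finite measures; \eqref{e:shift.2a} is then deduced by applying \eqref{e:shift.1a} to $\tilde g(\by)=g(\by/\|\by(\bs)\|)\1(\by(\bs)\neq\0)$. You instead package regular variation into a single shift-invariant, $(-\alpha)$-homogeneous tail measure $\mu$ on $(\R^d)^{\Z^k}$ and obtain both identities by one change of variables ($\by\mapsto S_\bs\by$) followed by the polar decomposition $\mu(dr,d\mathbf{w})=\alpha r^{-\alpha-1}\,dr\,\Prob(\BTheta\in d\mathbf{w})$ on $\{\by(\0)\neq\0\}$, which is exactly the content of \eqref{e:decomp} plus the Pareto law of $\|\BY(\0)\|$. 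What your route buys is conceptual transparency and the elimination of the $\vep$-cutoff and the continuity/monotone-class two-step for \eqref{e:shift.1a} (you still need a monotone-class step to pass from cylinder functions to general bounded measurable $g$, as you note); what it costs is the up-front construction of the infinite (only $\sigma$-finite) measure $\mu$ from the consistent family $\mu_{\bi,\bj}$ of \eqref{eq:mu}, which the standard Kolmogorov theorem does not directly give and which must be assembled from the finite restrictions to the sets $\{\|\by(\bt)\|>\vep\}$ — a standard but nontrivial piece of bookkeeping that the paper deliberately avoids by working window by window. Your treatment of the degenerate set $\{\by(\bs)=\0\}$ in \eqref{e:shift.2a} matches the role of the indicator in the paper's $\tilde g$, and your two derivations of \eqref{e:shift.2a} are in substance the same scale-invariance observation.
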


\begin{proof}
Since a probability measure on $\bigl(\R^d)^{\Z^k}$ is uniquely
determined by its finite-dimensional distributions, for
\eqref{e:shift.1a} it is enough to
prove that for any $\bi\leq \bj \in \Z^k$ and any bounded measurable
function $g:
\bigl(\R^d\bigr)^{\prod_{\ell = 1}^k(j_\ell-i_\ell+1)} \to
\R$, we have 
\begin{align}
		\E\bigl[ g(\BY([\bi-\bs :
                  \bj-\bs]))\1\bigl(\BY(-\bs)\not=\0\bigr)\bigr] &= \int_0^\infty
                \E[g(r\BThetarec)\1(r\|\BTheta(\bs)\|>1)]\,d(-r^{-\alpha}) \,. \label{e:shift.1}
		\end{align}
Suppose first that $g$ is bounded and continuous. Let $\vep>0$. 
By \eqref{eq:lawY} and  stationarity, the argument of 
\cite{basrak:segers:2009} gives us 
		\begin{align*}
		\E\bigl[ g(\BY([\bi-\bs :
                  \bj-\bs]))\1\bigl(\|\BY(-\bs)\|>\vep\bigr)\bigr] &= \int_\vep^\infty
                \E[g(r\BThetarec)\1(r\|\BTheta(\bs)\|>1)]\,d(-r^{-\alpha})\,.
		\end{align*}
If $g$ is, in addition, nonnegative, then we can let $\vep\downarrow
0$ in this relation, so that monotone convergence theorem gives us
\eqref{e:shift.1} for nonnegative bounded and continuous $g$. The
assumption of nonnegativity can now be removed by writing $g$ as the
difference of its positive and negative parts. Since integrals of
bounded continuous functions uniquely determine a finite measure,
we see that \eqref{e:shift.1} holds without the assumption of
continuity. As in \cite{basrak:segers:2009}, \eqref{e:shift.2a}
follows from \eqref{e:shift.1a} by defining a new bounded measurable
function on $\bigl(\R^d)^{\Z^k}$ as 
$\tilde g(\by)= g(\by/\|\by(\bs)\|)\1(\by(\bs)\not=0)$ and applying 
\eqref{e:shift.1a} to this function. 
\end{proof}

If $h$ is a bounded measurable function on the unit sphere
$\bbS^{d-1}$ in $\R^d$, then choosing $g(\by) =h(\by(\bs))$ if
$\|\by(\bs)\|=1$ and $g(\by) =0$ otherwise produces a bounded
measurable function on $\bigl(\Rbar^d)^{\Z^k}$. Applying
\eqref{e:shift.2a} to this function gives us the identity 
	\begin{align} \label{e:bs.1}
	\mathbb{E}[h(\BTheta(\bs)/\|\BTheta(\bs)\|)\|\BTheta(\bs)\|^\alpha]
          = \mathbb{E}[h(\BTheta(\0))\1(\BTheta(-\bs)\ne\0)]\,. 
	\end{align}
The value of $\E[\|\BTheta(\bs)\|^{\alpha}]$ is a measure of the
effect of changing the ``origin'' of the spectral field from $\0$ to
$\bs$ (recall that $\|\BTheta(\0)\|=1$ by the definition). 
With $h(\cdot) \equiv 1$, \eqref{e:bs.1} reduces to
$\E[\|\BTheta(\bs)\|^{\alpha}] = \Prob(\BTheta(-\bs)\ne \0)$ (so
$\E[\|\BTheta(\bs)\|^{\alpha}]\leq 1$). In particular, for $\delta>0$, 
\begin{align*}
 \lim_{x \to \infty} \Prob\bigl(\|\BX(\0)\|> \delta x \,\vert\,
  \|\BX(\bs)\|>x\bigr) &= \lim_{x \to \infty} \Prob\bigl(\|\BX(-\bs)\|>
  \delta x \,\vert\,   \|\BX(\0)\|>x\bigr) \\
& = \Prob(\|\BY(-\bs)\|>\delta)\,,
\end{align*}
so
\begin{align*}
\E[\|\BTheta(\bs)\|^{\alpha}] &= \Prob(\BTheta(-\bs)\ne \0) \\
&= \Prob(\BY(-\bs)\ne \0) = \lim_{\delta\downarrow 0}
\Prob(\|\BY(-\bs)\|>\delta) \\
&= \lim_{\delta\downarrow 0}\lim_{x \to \infty}
  \Prob\bigl(\|\BX(\0)\|> \delta x \,\vert\, \|\BX(\bs)\|>x\bigr) \,,
\end{align*}
thus providing an intuitive interpretation of the quantity
$\E[\|\BTheta(\bs)\|^{\alpha}]$. Furthermore, assuming that
$\E[\|\BTheta(\bs)\|^{\alpha}]>0$,  \eqref{e:bs.1} says that the two 
probability measures on $\bbS^{d-1}$, 
\begin{align*}
\Prob_1(\cdot) & =\frac{1}{\E[\|\BTheta(\bs)\|^{\alpha}]}  
\E[\|\BTheta(\bs)\|^{\alpha}\1(\BTheta(\bs)/\|\BTheta(\bs)\| \in \cdot)]\,,\\
 \Prob_2(\cdot) & =\frac{\Prob(\BTheta(\0)\in \cdot,\, \BTheta(-\bs)\ne \0)
}{ \Prob(\BTheta(-\bs)\ne \0)}\,,                  
\end{align*}
are equal. Therefore, a necessary and sufficient condition for
$\E[\|\BTheta(\bs)\|^{\alpha}] =1$ is 
$$
\Prob(\BTheta(\0)\in \cdot) =
\E[\|\BTheta(\bs)\|^{\alpha}\1(\BTheta(\bs)/\|\BTheta(\bs)\| \in
\cdot)]\,. 
$$

The above discussion is an extension of the ideas in
\cite{basrak:segers:2009} in the case of on-dimensional time to random fields. 

\medskip

The important ``change-of-time'' property \eqref{e:shift.2a} has
recently been shown in \cite{janssen:2018} to be equivalent, in the
case of the one-dimensional time, to a certain distributional
invariance property of the spectral process. As we explain below, this
equivalence extends to random fields. We start with a simple extension
of Lemma 2.2 {\it ibid.} It describes a rather unexpected property of the
spectral field. The argument requires the notion of invariant
order. A complete  order $\prec$ on $\Z^k$ is called invariant if
	$\bs \prec \bt$ for $\bs,\bt\in \Z^k$ implies that $\bs +\bi \prec
	\bt+\bi$ for any $\bi\in\Z^k$. An example of an invariant order is
	the lexicographic (or dictionary) order: for $\bs,\bt \in \Z^k$, we
	say that $\bs \prec \bt$ if either (1) $s_1 < t_1$, or (2) there exists $2
	\le \ell \le k$ such that $s_i = t_i$ for all $i = 1,\dots,\ell-1$,
	and $s_\ell < t_\ell$.

\begin{lemma} \label{l:anja} Let $\BThetaRF$ be an $\R^d$-valued random field such
  that $\Prob(\|\BTheta(\0)\| = 1) = 1$ and satisfies
  \eqref{e:shift.2a}.  Then $\|\BTheta(\bt)\| \to 0$ a.s. as $\|\bt\|
  \to \infty$ if and only if 
 $\sum_{\bt \in \Z^k} \|\BTheta(\bt)\|^{\alpha} < \infty$ a.s. 
\end{lemma}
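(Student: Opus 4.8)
The plan is to treat the two implications separately, with the forward implication being elementary and the reverse being where the change-of-time property \eqref{e:shift.2a} does all the work. Throughout write $T := \sum_{\bt\in\Z^k}\|\BTheta(\bt)\|^{\alpha}$, and note that $T \ge \|\BTheta(\0)\|^{\alpha} = 1$, that $T$ is invariant under shifts of the field, and that $T$ is homogeneous of degree $\alpha$, i.e. $T(c\,\by) = c^{\alpha}T(\by)$ for $c>0$. The implication ``$T<\infty$ a.s. $\Rightarrow \|\BTheta(\bt)\|\to0$'' needs no hypothesis beyond summability: on the a.s. event $\{T<\infty\}$ the set $\{\bt:\|\BTheta(\bt)\|^{\alpha}>\varepsilon\}$ is finite for every $\varepsilon>0$, so $\|\BTheta(\bt)\|\to0$ as $\|\bt\|\to\infty$.

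For the reverse implication assume $\|\BTheta(\bt)\|\to0$ a.s. On this a.s. event the supremum $M:=\sup_{\bt}\|\BTheta(\bt)\|$ lies in $[1,\infty)$ and is attained at finitely many indices; using the invariant total order $\prec$ to break ties, let $\bt^{\ast}=\bt^{\ast}(\BTheta)$ be the $\prec$-smallest maximizer (the ``anchor''), and set $\bt^{\ast}:=\0$ off this event. Two features make the anchor usable in \eqref{e:shift.2a}: the indicator $g(\by)=\1(\bt^{\ast}(\by)=\0)$ is bounded, measurable, and scale-invariant, so that $g(\BTheta(\cdot)/\|\BTheta(\bs)\|)=g(\BTheta(\cdot))$; and $\bt^{\ast}$ is shift-equivariant, $\bt^{\ast}(\BTheta(\cdot-\bs))=\bt^{\ast}(\BTheta)+\bs$, because $\prec$ is invariant. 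Consequently $g(\BTheta(\cdot-\bs))=\1(\bt^{\ast}(\BTheta)=-\bs)$, and on that event $\|\BTheta(-\bs)\|=M\ge1$, so the factor $\1(\BTheta(-\bs)\ne\0)$ in \eqref{e:shift.2a} is automatically $1$.

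The key step is to apply \eqref{e:shift.2a} to $g(\by)=\1(\bt^{\ast}(\by)=\0)$ and sum over all $\bs\in\Z^k$. The left-hand side becomes $\sum_{\bs}\Prob(\bt^{\ast}(\BTheta)=-\bs)=1$, since the events $\{\bt^{\ast}(\BTheta)=\br\}$, $\br\in\Z^k$, partition the space; the right-hand side becomes $\sum_{\bs}\E[\|\BTheta(\bs)\|^{\alpha}\1(\bt^{\ast}(\BTheta)=\0)]=\E[T\,\1(\bt^{\ast}(\BTheta)=\0)]$ by Tonelli. Hence
\[
\E\bigl[T\,\1(\bt^{\ast}(\BTheta)=\0)\bigr]=1<\infty,
\]
which already forces $\Prob(T=\infty,\,\bt^{\ast}(\BTheta)=\0)=0$: on the anchored event $T$ is finite almost surely. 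It remains to spread this finiteness to the whole space, and here one repeats the computation with the scale- and shift-invariant functional $g(\by)=\1(T(\by)=\infty)\,\1(\bt^{\ast}(\by)=\0)$. Summing over $\bs$ gives $\Prob(T=\infty)=\E[T\,\1(T=\infty)\,\1(\bt^{\ast}(\BTheta)=\0)]$, and the right-hand side vanishes because its integrand is supported on the null event $\{T=\infty,\,\bt^{\ast}(\BTheta)=\0\}$ just identified. Therefore $T<\infty$ almost surely.

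I expect the main obstacle to be conceptual rather than computational: recognizing that summing \eqref{e:shift.2a} over all shifts produces the weights $\|\BTheta(\bs)\|^{\alpha}/T$, which sum to $\1(T<\infty)$ rather than to $1$ — the ``loss of mass'' on $\{T=\infty\}$ is precisely the phenomenon the lemma must detect. Making this rigorous requires a genuinely well-defined, shift-equivariant anchor so that the left-hand sums telescope to a probability, which is exactly why the invariant order $\prec$ is introduced; without a canonical choice of maximizer the partition argument collapses. The two-step bootstrap — first finiteness on the anchored event, then propagation to the full space — is the other point needing care, since one cannot invoke stationarity of $\BTheta$ directly and must use \eqref{e:shift.2a} a second time as its surrogate.
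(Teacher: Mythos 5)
Your proof is correct and uses essentially the same mechanism as the paper: anchoring at the $\prec$-smallest maximizer of $\|\BTheta(\cdot)\|$, exploiting the scale-invariance and shift-equivariance of that anchor, and summing \eqref{e:shift.2a} over all shifts so that the left-hand sides telescope to a total probability of $1$ while the right-hand sides reassemble $\sum_{\bt}\|\BTheta(\bt)\|^{\alpha}$. The only difference is presentational — the paper runs the computation at a location $\bi$ where $\{T=\infty,\BT=\bi\}$ has positive mass and derives $\infty\le 1$ by contradiction, whereas you prove the identity $\E[T\,\1(\bt^{\ast}=\0)]=1$ directly and then propagate finiteness with a second application of \eqref{e:shift.2a} — and both routes are sound.
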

\begin{proof}
Trivially,  the summability condition implies that the values of the
field vanish at infinity. In the other direction, fix an invariant
order on $\Z^k$, and 
suppose that the event $\{ \|\BTheta(\bt)\| \to 0$ as $\|\bt\|
  \to \infty\}$ has probability 1. On this event there is, clearly, a
  finite number of points in $\Z^k$ over which
  $\|\BTheta(\bi)\|$ achieves the supremum
  $\sup_{\bt \in \Z^k}\| \BTheta(\bt)\|$. Therefore, on
  this event we can define a $\Z^k$-valued random variable $\BT$ such
  that $\|\BTheta(\BT)\|=\sup_{\bt \in \Z^k}\| \BTheta(\bt)\|$ and any other point of $\Z^k$ with this property succeeds
  $\BT$ in the invariant order. If, to the contrary, we have 
$\Prob(\sum_{\bt \in \Z^k} \|\BTheta(\bt)\|^{\alpha} = \infty)>0$,
then there exists  $\bi \in \Z^k$ such that $P(\sum_{\bt \in \Z^k}
\|\BTheta(\bt)\|^{\alpha}  = \infty,\BT=\bi)>0$, which gives us
	\begin{align*}
	\infty =&\E\left[\sum_{\bt \in \Z^k} \|\BTheta(\bt)\|^{\alpha}\1(\BT = \bi)\right] = \sum_{\bt \in \Z^k} \E\left[\|\BTheta(\bt)\|^{\alpha}\1(\BT = \bi)\right].
	\end{align*}
For each $\bi \in \Z^k$ we define a function $g_\bi: \bigl(\Rbar^d)^{\Z^k} \to \R$ as
        follows. If $(\btheta(\bs), \, \bs\in \Z^k)$ is such that
$$
\|\btheta(\bj)\| <\|\btheta(\bi)\| \ \text{for} \ \bj \prec \bi, \, \|\btheta(\bj)\|
\le \|\btheta(\bi)\| \ \text{for} \ \bj \succeq \bi\,,
$$
then we set $g_\bi(\btheta(\bs), \, \bs\in \Z^k)=1$. Otherwise we set
$g_\bi(\btheta(\bs), \, \bs\in \Z^k)=0$.  Clearly, each $g_\bi$ is a
bounded measurable function. Then by the ``change of time property''
\eqref{e:shift.2a}, 
	\begin{align*}
	\infty =& \sum_{\bt \in \Z^k}
          \E\left[\|\BTheta(\bt)\|^{\alpha}g_{\bi}(\BTheta(\bs):\bs \in
          \Z^k)\right] =  \sum_{\bt \in \Z^k}
          \E\left[\|\BTheta(\bt)\|^{\alpha}g_{\bi}\left(\frac{\BTheta(\bs)}{\|\BTheta(\bt)\|}:\bs
          \in \Z^k\right)\right] \\
	=& \sum_{\bt \in \Z^k} \E\left[g_{\bi}\left(\BTheta(\bs - \bt):\bs \in \Z^k\right)\1(\BTheta(-\bt) \ne \0)\right]\\
	=& \sum_{\bt \in \Z^k}\E\left[ \1(\|\BTheta(\bj)\|
           <\|\BTheta(\bi-\bt)\|,\, \bj \prec \bi-\bt)
           \1(\|\BTheta(\bj)\| \le \|\BTheta(\bi-\bt)\|,\, \bj \succeq \bi-\bt ) \1(\BTheta(-\bt) \ne \0)\right]\\
	\le& \sum_{\bt \in \Z^k}\E\left[ \1(\|\BTheta(\bj)\|
           <\|\BTheta(\bi-\bt)\|,\, \bj \prec \bi-\bt)
           \1(\|\BTheta(\bj)\| \le \|\BTheta(\bi-\bt)\|,\, \bj \succeq \bi-\bt )\right]\\
	=&  \sum_{\bt \in \Z^k} P(\BT = \bi - \bt) = 1,
	\end{align*}
	which leads to a contradiction. Hence, $\sum_{\bt \in \Z^k}
        \|\BTheta(\bt)\|^{\alpha} < \infty$ a.s.. 
\end{proof}

As in the one-dimensional case, the spectral field vanishes a.s. at
infinity under Condition \ref{cond:vanish} below; see Theorem
\ref{thm:cluster}. Lemma \ref{l:anja} shows that  under Condition
\ref{cond:vanish} the spectral process also satisfies the stronger
summability statement.

The next theorem is a version of Theorem 2.4 in \cite{janssen:2018}
for random fields. It establishes a certain invaraince property of the
law of a spectral fields satsifying the equaivalent conditions of
Lemma \ref{l:anja}. 
\begin{theorem} \label{t:tanja}
	Let $\BThetaRF$ be an $\R^d$-valued random field such that
$0<\sum_{\bt \in \Z^k} \|\BTheta(\bt)\|^{\alpha} < \infty$ a.s.. Let 
$\calBI$ be an $\Z^k$-valued random element such that
	\begin{align}
	\Prob(\calBI  = \bi \mid \BThetaRF) = \frac{\|\BTheta(\bi)\|^{\alpha}}{\sum_{\bt \in \Z^k}\|\BTheta(\bt)\|^{\alpha}}. \label{eq: indexpmf}
	\end{align}
	for $\bi \in \Z^k$.
	Define 
	\begin{align*}
  \BTheta^{RS}(\bt) =  \frac{\BTheta(\bt
          +\calBI)}{\|\BTheta(\calBI)\|}, \  \bt \in \Z^k\,.
	\end{align*}
	Then a necessary and sufficient condition for the equality of
        the laws 
	\begin{align}
	\calL(\BThetaRSRF ) = \calL(\BThetaRF) \label{eq: RSlaw}
	\end{align}
is that  $\BThetaRF$ satisfies \eqref{e:shift.2a} and
$\Prob(\|\BTheta(\0)\|=1)=1$. 
\end{theorem}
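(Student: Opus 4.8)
The plan is to reduce both implications to a single identity obtained by conditioning on the field. Write $S=\sum_{\bt\in\Z^k}\|\BTheta(\bt)\|^{\alpha}$; this is finite, strictly positive, and shift-invariant. Integrating out $\calBI$ using \eqref{eq: indexpmf}, for every bounded measurable $F\colon\bigl(\R^d\bigr)^{\Z^k}\to\R$ one has
\begin{align}
\E\bigl[F(\BThetaRSRF)\bigr]
=\sum_{\bi\in\Z^k}\E\left[\frac{\|\BTheta(\bi)\|^{\alpha}}{S}\,
F\!\left(\frac{\BTheta(\cdot+\bi)}{\|\BTheta(\bi)\|}\right)\right]
=:\sum_{\bi\in\Z^k}A_{\bi}(F),
\label{eq:rsmaster}
\end{align}
the interchange of summation and expectation being justified by $\sum_{\bi}\|\BTheta(\bi)\|^{\alpha}/S=1$ and boundedness of $F$. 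Introduce also $B_{\bi}(F):=\E\bigl[\|\BTheta(-\bi)\|^{\alpha}S^{-1}F(\BThetaRF)\bigr]$; since $\sum_{\bi}\|\BTheta(-\bi)\|^{\alpha}=S$, one has $\sum_{\bi}B_{\bi}(F)=\E[F(\BThetaRF)]$ with no extra hypothesis. The key observation is that, once $\Prob(\|\BTheta(\0)\|=1)=1$, a direct substitution (using shift-invariance of $S$) shows $A_{\bi}(F)=\E[g_F(\BTheta(\cdot+\bi))\1(\BTheta(\bi)\ne\0)]$ and $B_{\bi}(F)=\E[g_F(\BTheta(\cdot)/\|\BTheta(-\bi)\|)\|\BTheta(-\bi)\|^{\alpha}]$, where $g_F(\btheta)=\|\btheta(\0)\|^{\alpha}S(\btheta)^{-1}F(\btheta(\cdot)/\|\btheta(\0)\|)\1(\btheta(\0)\ne\0)$ is bounded and measurable. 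Hence the per-shift identity $A_{\bi}(F)=B_{\bi}(F)$ is exactly \eqref{e:shift.2a} with $\bs=-\bi$, applied to $g_F$.

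Given this, the direction assuming \eqref{e:shift.2a} and $\Prob(\|\BTheta(\0)\|=1)=1$ is immediate: \eqref{e:shift.2a} with $\bs=-\bi$ yields $A_{\bi}(F)=B_{\bi}(F)$ for every $\bi$, and summing over $\bi$ in \eqref{eq:rsmaster} gives $\E[F(\BThetaRSRF)]=\sum_{\bi}A_{\bi}(F)=\sum_{\bi}B_{\bi}(F)=\E[F(\BThetaRF)]$ for all bounded measurable $F$, i.e. \eqref{eq: RSlaw}.

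For the converse I would first note that $\|\BThetaRS(\0)\|\equiv1$ by construction, so \eqref{eq: RSlaw} forces $\Prob(\|\BTheta(\0)\|=1)=1$ (and then $S\ge1$ a.s.). To obtain \eqref{e:shift.2a} it suffices, by the key observation, to upgrade the summed identity $\sum_{\bi}A_{\bi}(F)=\sum_{\bi}B_{\bi}(F)$ — which \eqref{eq: RSlaw} and \eqref{eq:rsmaster} deliver — to the individual identities $A_{\bi}(F)=B_{\bi}(F)$. Two routine reductions make this the whole story: in \eqref{e:shift.2a} the argument of $g$ always has unit norm at coordinate $\bs$, so only globally scale-invariant $g$ need be tested, and these are exactly the functions of the form $g_F$ (realized for bounded $F$ after truncating to $\{S\le C\}$ and letting $C\to\infty$, which is legitimate since both sides of \eqref{e:shift.2a} are dominated by $\|g\|_\infty$).

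The decoupling of the single summed identity into the per-shift identities is the step I expect to be the main obstacle, since \eqref{eq: RSlaw} a priori only controls the $\bi$-sum. The plan is to carry it out by reading \eqref{eq: RSlaw} as a mass-stationarity statement and inverting it: from the random-shift-invariant $\BThetaRF$ I would build a homogeneous measure $\nu$ on $\bigl(\R^d\bigr)^{\Z^k}\setminus\{\0\}$ by spreading the normalized field over $\Z^k$ against the intensity $S^{-1}$, show that \eqref{eq: RSlaw} is equivalent to the shift-invariance of $\nu$ rather than merely to its summed consequence, and then recognize $\nu$ as the tail measure of a stationary jointly regularly varying field with spectral field $\BThetaRF$ via Theorem \ref{thm:tailfield}; the change-of-time property \eqref{e:shift.2a} then follows from Theorem \ref{thm:shift}. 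The delicate points are the measurability and shift-equivariance of the anchoring used to define $\nu$ — for which the invariant-order argmax of Lemma \ref{l:anja} is available, as $0<S<\infty$ a.s. — together with the interchange-of-limits bookkeeping in the $S$-truncation.
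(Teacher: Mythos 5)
Your sufficiency direction (from \eqref{e:shift.2a} and $\Prob(\|\BTheta(\0)\|=1)=1$ to \eqref{eq: RSlaw}) is correct and is essentially the paper's argument: condition on the field, integrate out $\calBI$, and apply \eqref{e:shift.2a} to the auxiliary function $\bar g(\btheta)=g(\btheta/\|\btheta(\0)\|)\,\|\btheta(\0)\|^{\alpha}/\|\btheta\|_{\alpha}^{\alpha}$ term by term before summing. Your identification of the per-shift identity $A_{\bi}(F)=B_{\bi}(F)$ with \eqref{e:shift.2a} at $\bs=-\bi$ is also the right way to see the structure.

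The necessity direction, however, has a genuine gap, and it stems from a misdiagnosis of what \eqref{eq: RSlaw} gives you. You treat \eqref{eq: RSlaw} as delivering only the single summed identity $\sum_{\bi}A_{\bi}(F)=\sum_{\bi}B_{\bi}(F)$ for each $F$, and then propose to ``decouple'' it into the per-shift identities via a mass-stationarity/Palm construction that is only sketched (and whose key steps --- realizing the candidate measure $\nu$ as the tail measure of an actual stationary field so that Theorem \ref{thm:shift} applies --- are themselves nontrivial; Theorem \ref{thm:tailfield} goes from a field to its tail field, not from a candidate measure to a field). No decoupling is needed. Fix $\bs$ and a bounded measurable $g$, and apply the equality of laws \eqref{eq: RSlaw} \emph{separately} to the two functionals
\begin{align*}
h_1(\btheta)=g(\btheta(\cdot-\bs))\1(\btheta(-\bs)\ne\0),
\qquad
h_2(\btheta)=g\left(\frac{\btheta(\cdot)}{\|\btheta(\bs)\|}\right)\|\btheta(\bs)\|^{\alpha}\1(\btheta(\bs)\ne\0)\,.
\end{align*}
Expanding $\E[h_1(\BThetaRSRF)]$ over $\calBI=\bi$ gives
$\sum_{\bi}\E\bigl[\|\BTheta(\bi)\|^{\alpha}\|\BTheta\|_{\alpha}^{-\alpha}\,
g(\BTheta(\cdot-\bs+\bi)/\|\BTheta(\bi)\|)\1(\BTheta(-\bs+\bi)\ne\0)\bigr]$,
while expanding $\E[h_2(\BThetaRSRF)]$ over $\calBI=\bj$ and simplifying
$\|\BTheta(\bj)\|^{\alpha}\cdot\|\BTheta(\bs+\bj)\|^{\alpha}/\|\BTheta(\bj)\|^{\alpha}=\|\BTheta(\bs+\bj)\|^{\alpha}$
yields, after the substitution $\bj=\bi-\bs$, exactly the same sum term by term. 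Hence the two sides of \eqref{e:shift.2a} are equal for every fixed $\bs$, with no inversion of a summed identity required. (One first takes $g\equiv 1$, or $g\geq 0$, to see that $\E[\|\BTheta(\bs)\|^{\alpha}]\leq 1$, which justifies applying the law equality to the unbounded factor $\|\btheta(\bs)\|^{\alpha}$.) In short: the information in \eqref{eq: RSlaw} is one identity per bounded measurable functional, and the paper spends two such identities per pair $(\bs,g)$ where your plan spends only one; that is what makes the elementary reindexing argument work and your route unnecessary.
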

\begin{proof}
	Suppose first \eqref{eq: RSlaw} holds. Since
        $\|\BThetaRS(\0)\|=1$, we must have $\Prob(\|\BTheta(\0)\|=1)=1$. Now, let $g:
	\bigl(\R^d)^{\Z^k} \to \R$ be a bounded measurable function.
        Denoting $\|\BTheta\|_{\alpha} = \left(\sum_{\bt \in \Z^k}
          \|\BTheta(\bt)\|^{\alpha}\right)^{1/\alpha}$, we have 
	\begin{align*}
	&\E[g(\BTheta(\cdot - \bs))\1(\BTheta(-\bs)\ne \0)] 
 	=  \E\left[   g(\BThetaRS(\cdot -
            \bs))\1(\BThetaRS(-\bs)\ne  \0)\right] \\
	= & \E\left[\E\left[ g\left(\frac{\BTheta(\cdot - \bs+\calBI)}{\|\BTheta(\calBI)\|}\right)\1\left(\frac{\BTheta(-\bs+\calBI)}{\|\BTheta(\calBI)\|}\ne \0\right) \,\middle|\,\BThetaRF \right]  \right]\\
 	= & \E\left[\sum_{\bi \in \Z^k} \frac{\|\BTheta(\bi)\|^{\alpha}}{\|\BTheta\|_{\alpha}^{\alpha}} g\left(\frac{\BTheta(\cdot - \bs+\bi)}{\|\BTheta(\bi)\|}\right)\1\left(\BTheta(-\bs+\bi)\ne \0\right) \right].
	\end{align*}
	Similarly,
	\begin{align*}
	&\E\left[g\left(\frac{\BTheta(\cdot)}{\|\BTheta(\bs)\|}\right)
	\|\BTheta(\bs)\|^{\alpha}\right] \\
	= &\E\left[g\left(\frac{\BThetaRS(\cdot)}{\|\BThetaRS(\bs)\|}\right)
	\|\BThetaRS(\bs)\|^{\alpha}\right]\\
	= &\E\left[\sum_{\bj \in \Z^k} \frac{\|\BTheta(\bj)\|^{\alpha}}{\|\BTheta\|_{\alpha}^{\alpha}} g\left(\frac{\BTheta(\cdot+\bj)}{\|\BTheta(\bs+\bj)\|}\right)
	\left\|\frac{\BTheta(\bs+\bj)}{\|\BTheta(\bj)\|}\right\|^{\alpha} \1(\BTheta(\bj) \ne \0)\right]\\
	= &\E\left[\sum_{\bi \in \Z^k}
            \frac{\|\BTheta(\bi)\|^{\alpha}}{\|\BTheta\|_{\alpha}^{\alpha}}
            g\left(\frac{\BTheta(\cdot-\bs+\bi)}{\|\BTheta(\bi)\|}\right)\1(\BTheta(-s+\bi)
            \ne \0) \right]
	\end{align*}
	by substituting $\bj=\bi - \bs$. The equal results of
        these two calculations show that the random field has the
        property \eqref{e:shift.2a}. 
 
 In the other direction, suppose that the random field $\BThetaRF$
 satisfies \eqref{e:shift.2a} and $\Prob(\|\BTheta(\0)\|=1) = 1$. For
 any bounded measurable function $g$,  
	\begin{align*}
	\E[g\BThetaRSRF] = \sum_{\bi \in \Z^k} \E\left[g\left(\frac{\BTheta(\bt - \bi)}{\|\BTheta(-\bi)\|} \right) \frac{\|\BTheta(-\bi)\|^{\alpha}}{\|\BTheta\|_{\alpha}^{\alpha}}\1(\BTheta(-\bi) \ne \0)\right].
	\end{align*}
	Define a new function $\bar{g}$ by 
	\begin{align*}
	\bar{g}\left( \btheta(\bt): \, \bt\in \Z^k\right) = g\left(\frac{\btheta(\bt)}{\|\btheta(\0)\|}: \bt \in \Z^k \right)\frac{\|\btheta(\0)\|^{\alpha}}{\|\btheta\|_{\alpha}^{\alpha}}
	\end{align*}
if $\btheta(\0)\not=\0$ and $\|\btheta\|_{\alpha}^{\alpha}=\sum_{\bi \in \Z^k}
\|\btheta(\bi)\|^\alpha<\infty$. If these conditions do not hold, set
$\bar g=0$. Since $\bar g$ is a bounded and measurable function, we
have by \eqref{e:shift.2a} and the fact that
$\Prob(\|\BTheta(\0)\|=1)=1$, 
	\begin{align*}
	\E[g\BThetaRSRF] =& \sum_{\bi \in \Z^k} \E\left[\bar{g}(\BTheta(\bt - \bi):\bt \in \Z^k)\1(\BTheta(-\bi) \ne \0)\right]\\
	=& \sum_{\bi \in \Z^k} \E\left[\bar{g}\left(\frac{\BTheta(\bt)}{\|\BTheta(\bi)\|}:\bt \in \Z^k\right)\|\BTheta(\bi)\|^{\alpha}\right]\\
 =& \sum_{\bi \in \Z^k} \E\left[g(\BTheta(\bt):\bt \in \Z^k) \frac{\|\BTheta(\bi)\|^{\alpha}}{\|\BTheta\|_{\alpha}^{\alpha}}\right]\\
	= &\E\left[g(\BTheta(\bt):\bt \in \Z^k)\right],
	\end{align*} 
	proving \eqref{eq: RSlaw}. 
\end{proof}

\medskip
\section{Extremal index of a random field} \label{sec:extind}

The extremal index is one of the major ways to characterize how the
extremes of a stationary sequence cluster; it was introduced in
\cite{leadbetter:1983} and extensively studied and used ever
since. The corresponding notion for random fields appeared in
\cite{ferreira:pereira:2008}. One of the attractive features of the
notion of the extremal index is that it admits multiple
interpretations. These different points of view on extremal index,
however, turn out to be equivalent only under appropriate technical
conditions (and the equivalences turn out to be even more strained for
random fields).  In fact, the original definition of the extremal index
itself includes an assumption of its existence. For jointly regularly
varying random fields, the tail field sheds new light on the notion of
the extremal index. Importantly, no assumptions of existence are
required for the tail field-based notions of the extremal index
(apart, of course, from the regular 
variation). In order to clarify the situation, we keep the definitions distinct. 

\begin{definition}  \label{def:classical.ei}
An $\R^d$-valued stationary random
  field $\BXRF$ has a {\bf classical extremal index} $\theta_{\rm cl}$ if for
  each $\tau>0$   and any array $(u_{\bn}(\tau))$ satisfying 
	\begin{align}
	\left(\prod_{\ell = 1}^k n_\ell \right)
          \Prob(\|\BX(\0)\|>u_{\bn}(\tau)) \to \tau 
\label{eq:u}
	\end{align}
	as $\bn \to \binfty$, it also holds that
	\begin{align} \label{e:u1}
	\Prob\left(M_X(\rec^+_{\bn}) \le u_{\bn}(\tau)\right) \to
          e^{-\theta_{\rm cl} \tau}.
	\end{align}
\end{definition}

\begin{remark}
It is common to formulate the definition of the classical extremal
index by requiring that \eqref{eq:u} and \eqref{e:u1} hold for some
array $(u_{\bn}(\tau))$. This appears to tie the notion to a
particular choice of the array, and does not seem to broaden the
applicability of the definition. 
\end{remark}

\begin{definition}  \label{def:block.ei}
An $\R^d$-valued stationary random
  field $\BXRF$ has a {\bf block extremal index} $\theta_{\rm b}$ if
  for some 
  array $(\br_{\bn})$ increasing to $\bm \infty$ such that $\br_\bn
  /\bn \to \0$, for   each $\tau>0$   and any array $(u_{\bn}(\tau))$ satisfying 
\eqref{eq:u}, it holds that 
	\begin{align} \label{eq:theta}
	\theta_{\rm b}  = \lim_{\bn \to \binfty}
          \frac{\Prob(M_X(\rec^+_{\br_{\bn}})>u_{\bn}(\tau))}{\left(\prod_{\ell
          = 1}^k r_{n_\ell}\right)\Prob(\|\BX(\0)\|>u_{\bn}(\tau))} 
	\end{align}
\end{definition}

Under certain conditions the block extremal index coincides with the
classical extremal index, assuming the latter exists. One such set of
conditions is the so called coordinatewise tail mixing condition; see
Proposition 3.2 in \cite{pereira:martins:ferreira:2017}. 

The next definition of the extremal index is well known in the case of
the one-dimensional time, but does not seem to have been formulated
for random fields. It concentrates on the conditional probability of
the random field being free of exceedances over the rest of a
hypercube given an exceedance at one of the corners of the hypercube. 

\begin{definition}  \label{def:run.ei}
An $\R^d$-valued stationary random
  field $\BXRF$ has a {\bf run extremal index} $\theta_{\rm run,\bi}$ with respect
  to $\bi\in \{0,1\}^k$ if for some array $(\br_{\bn})$
increasing to $\bm \infty$ such that $\br_\bn   /\bn \to \0$, and any
array $(u_{\bn}(\tau))$ satisfying  
\eqref{eq:u} for some $\tau>0$, it holds
that 
\begin{align}
\theta_{\rm run,\bi}=\lim_{\bn \to \binfty}\Prob(M_X(\rec^+_{\br_{\bn}}\backslash
  \{\bt_{\bn,\bi}\}) \le u_{\bn}(\tau) \mid X(\bt_{\bn,\bi}) >
  u_{\bn}(\tau)) \,,
\end{align} 
where $(t_{\bn,\bi})_l=r_{n_l}-1$ if $i_{n_l}=1$ and $0$ if
$i_{n_l}=0$. 
\end{definition}

When the time is one-dimensional, the hypercube has two corners, and
the stationarity implies that the run extremal index, if it exists, is
the same for the two corners. Indeed,  
\begin{align*}
&\Prob\left(\max_{t=1,\dots,r_n-1} X(t) \le u_n(\tau)\mid X(0) > u_n(\tau)\right) \\
=&\frac{\Prob\left(\max_{t=1,\dots,r_n-1} X(t) \le u_n(\tau),  X(0) >
   u_n(\tau)\right)}{\Prob(X(0)>u_n(\tau))}\\ 
=&\frac{\Prob\left(\max_{t=0,\dots,r_n-2} X(t) \le u_n(\tau)\right)-\Prob\left(\max_{t=0,\dots,r_n-1} X(t) \le u_n(\tau)\right)}{\Prob(X(r_n-1)>u_n(\tau))}\\
=&\frac{\Prob\left(\max_{t=0,\dots,r_n-2} X(t) \le u_n(\tau) , X(r_n-1) > u_n(\tau)\right)}{\Prob(X(r_n-1)>u_n(\tau))}\\
=&\Prob\left(\max_{t=0,\dots,r_n-2} X(t) \le u_n(\tau) \,\middle|\,  X(r_n-1) > u_n(\tau)\right).
\end{align*}
This, however, is no longer necessarily the case that for random
fields the run extremal index is independent of the corner of the
hypercube used to define it, as will be seen in Example
\ref{ex:corners} below. When the time is one-dimensional, under
certain conditions the run extremal index coincides with the 
classical extremal index; one such set of conditions being the AIM
conditions of  \cite{obrien:1987}. As the previous discussion and 
Example \ref{ex:corners} indicate, this is no longer the case for random
fields. 

\begin{definition}  \label{def:tf.ei}
Let $\BXRF$ be a stationary jointly regularly varying $\R^d$-valued
random field $\BXRF$ with the tail field $\BYRF$. Its {\bf tail field
  extremal index} $\theta_{\rm tf,\bi}$ with respect to $\bi\in
\{0,1\}^k$ is 
$$
\theta_{\rm tf,\bi} = \Prob\left( \sup_{\bt:\, \bt (\ones-2\bi)\geq
    \0, \,     \bt\not=\0} \|\BY(\bt)\|\leq 1\right) \,.
$$
\end{definition}
Under appropriate conditions, similar to those of
\cite{basrak:segers:2009} in the one-dimensional time case, the tail
field extremal index coincides  with the run extremal
index and, in particular, the latter exists. 
 
\begin{proposition} \label{pr:corner.tail}
Let $\BXRF$ be a stationary jointly regularly varying random field 
with the tail field $\BYRF$. Let $\bi\in \{0,1\}^k$. Suppose that for
any array $(u_{\bn}(\tau))$ satisfying  
\eqref{eq:u} for some $\tau>0$ and some array $(\br_{\bn})$
increasing to $\bm \infty$ such that $\br_\bn   /\bn \to \0$,
$$
\lim_{M\to\infty} \limsup_{\bn\to\binfty} \Prob\left(
  M_X(\rec^+_{A_{M,\bi}}\backslash
  \{t_{\bn,\bi}\} ) \leq   u_{\bn}(\tau), \,  M_X(\rec^+_{\br_{\bn}}\backslash
  \rec^+_{A_{M,\bi}} )> u_{\bn}(\tau) \bigg|   \|\BX(\0)\|>u_\bn(\tau)\right) =0\,,
$$
where $A_{M,\bi}=\{ \bx\in \rec^+_{\br_{\bn}}:\,
|x_l-(t_{\bn,\bi})_l|\leq M, \, l=1,\ldots, d\}$. Then the run
extremal index $\theta_{\rm run,\bi}$ exists and is equal to the tail
field extremal index $\theta_{\rm tf,\bi}$. 
\end{proposition}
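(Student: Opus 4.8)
The plan is to approximate the maximum over the whole hypercube by the maximum over a finite window anchored at the distinguished corner, identify the window limit with a finite-dimensional functional of the tail field via Theorem~\ref{thm:tailfield}, and then let the window grow to recover $\theta_{\rm tf,\bi}$; the displayed hypothesis of the proposition is exactly what controls the window-truncation error. Throughout I write $x=u_\bn(\tau)$, noting that $x\to\infty$ as $\bn\to\binfty$ since $\prod_\ell n_\ell\to\infty$ forces $\Prob(\|\BX(\0)\|>x)\to0$ in \eqref{eq:u}. Abbreviate $p_\bn=\Prob\bigl(M_X(\rec^+_{\br_\bn}\setminus\{\bt_{\bn,\bi}\})\le x \mid \|\BX(\bt_{\bn,\bi})\|>x\bigr)$, the quantity whose limit defines $\theta_{\rm run,\bi}$, and let $A_{M,\bi}$ be the near-corner region from the statement. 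Because $\rec^+_{\br_\bn}\setminus\{\bt_{\bn,\bi}\}$ is the disjoint union of $A_{M,\bi}\setminus\{\bt_{\bn,\bi}\}$ and $\rec^+_{\br_\bn}\setminus A_{M,\bi}$, I would split $p_\bn=q_\bn(M)-e_\bn(M)$, where $q_\bn(M)$ is the conditional probability of no exceedance over the window $A_{M,\bi}\setminus\{\bt_{\bn,\bi}\}$ and $e_\bn(M)\ge0$ is the conditional probability of no window exceedance together with an exceedance over $\rec^+_{\br_\bn}\setminus A_{M,\bi}$.

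The first main step is the finite-window limit. By stationarity I may re-center the conditioning exceedance at $\0$; the set of displacements from $\bt_{\bn,\bi}$ to the points of $A_{M,\bi}$ then stabilises, for $\bn$ large, to the finite set $C_\bi^M=\{\bt:\ \bt(\ones-2\bi)\ge\0,\ |t_\ell|\le M\ \forall\ell\}$, a truncation of the cone $C_\bi=\{\bt:\ \bt(\ones-2\bi)\ge\0\}$. Since $C_\bi^M\setminus\{\0\}$ is finite, Theorem~\ref{thm:tailfield} gives $q_\bn(M)\to q_\infty(M):=\Prob\bigl(\max_{\bt\in C_\bi^M,\,\bt\ne\0}\|\BY(\bt)\|\le1\bigr)$, provided the limiting event has an $\calL(\BY)$-null boundary. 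This is where the decomposition \eqref{e:decomp} enters: writing $\BY(\bt)=\|\BY(\0)\|\,\BTheta(\bt)$ with $\|\BY(\0)\|$ standard Pareto and independent of the spectral field, the product $\|\BY(\bt)\|=\|\BY(\0)\|\,\|\BTheta(\bt)\|$ has no atom at the positive level $1$, so $\Prob\bigl(\max_{\bt\in C_\bi^M,\,\bt\ne\0}\|\BY(\bt)\|=1\bigr)=0$ and the portmanteau theorem applies. Letting $M\to\infty$, $C_\bi^M\uparrow C_\bi$ and continuity from above give $q_\infty(M)\downarrow\Prob\bigl(\sup_{\bt\in C_\bi,\,\bt\ne\0}\|\BY(\bt)\|\le1\bigr)=\theta_{\rm tf,\bi}$.

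The second step is the error control and the final squeeze. After the same stationary re-centering, the displayed hypothesis of the proposition reads exactly $\lim_{M\to\infty}\limsup_{\bn\to\binfty}e_\bn(M)=0$. From $p_\bn=q_\bn(M)-e_\bn(M)$ with $e_\bn(M)\ge0$ I obtain $\limsup_\bn p_\bn\le q_\infty(M)$ for every $M$, hence $\limsup_\bn p_\bn\le\theta_{\rm tf,\bi}$; and $\liminf_\bn p_\bn\ge q_\infty(M)-\limsup_\bn e_\bn(M)$, so letting $M\to\infty$ yields $\liminf_\bn p_\bn\ge\theta_{\rm tf,\bi}$. Together these show $\lim_\bn p_\bn=\theta_{\rm tf,\bi}$, i.e.\ $\theta_{\rm run,\bi}$ exists and equals $\theta_{\rm tf,\bi}$.

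I expect the finite-window convergence to be the delicate point: not the weak convergence itself, which is immediate from Theorem~\ref{thm:tailfield}, but the verification that the threshold event $\{\max\|\BY(\bt)\|\le1\}$ is a continuity set, which forces one to invoke the independent-Pareto structure of the tail field in \eqref{e:decomp}. The remaining ingredients — the stationary identification of the corner exceedance with an exceedance at $\0$, the monotone growth of the window, and substituting the proposition's hypothesis for $e_\bn(M)$ — are routine.
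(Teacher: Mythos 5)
Your proof is correct and takes essentially the same route as the paper's: the same decomposition of the no-exceedance probability into a finite-window term plus the truncation error $e_\bn(M)$, the same identification of the window limit with $\Prob\bigl(\max_{\bt\in C_\bi^M,\,\bt\ne\0}\|\BY(\bt)\|\le1\bigr)$ via Theorem~\ref{thm:tailfield}, and the same two-sided squeeze in which the displayed hypothesis kills $\limsup_\bn e_\bn(M)$ as $M\to\infty$. The only substantive addition is your explicit check that $\{\max\|\BY(\bt)\|\le 1\}$ is a continuity set of the limit law via the Pareto--spectral factorization \eqref{e:decomp}, a point the paper leaves implicit.
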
 
\begin{proof}
It is enough to consider the case $\bi=\0$, in which case the
condition in the proposition reduces to 
\begin{equation} \label{e:corners.tf.2}
\lim_{M\to\infty} \limsup_{\bn\to\binfty} \Prob\left( M_X(\rec^+_{M\ones}\backslash
  \{\0\} ) \leq   u_{\bn}(\tau), \,  M_X(\rec^+_{\br_{\bn}}\backslash
  \rec^+_{M\ones} )> u_{\bn}(\tau) \bigg|   \|\BX(\0)\|>u_\bn(\tau)\right) =0\,.
\end{equation}
We have for any $M=1,2,\ldots$, any $(\br_{\bn})$
increasing to $\bm \infty$, 
\begin{align*}
  &\Prob\left(\sup_{\0\leq \bt \leq M\ones, \, \bt\not=\0
          }\|\BY(\bt)\|\le 1\right) \\
=\lim_{\bn \to \binfty}& \Prob\left( \sup_{\0\leq \bt \leq M\ones, \,
  \bt\not=\0 }\frac{1}{u_\bn(\tau)} \|\BX(\bt)\|\leq   1\bigg|
  \|\BX(\0)\|>u_\bn(\tau)\right) \\
\geq \limsup_{\bn \to \binfty}\, & \Prob(M_X(\rec^+_{\br_{\bn}}\backslash
  \{\0\} )\le u_{\bn}(\tau) \mid  \|\BX(\0)\| >
  u_{\bn}(\tau)) \,.
\end{align*}
Letting $M\to\infty$, we obtain
\begin{equation} \label{e:corners.tf.1}
\Prob\left(\sup_{\bt \geq\0, \, \bt\not=\0
          }\|\BY(\bt)\|\le 1\right) \geq
\limsup_{\bn \to \binfty}\, \Prob(M_X(\rec^+_{\br_{\bn}}\backslash
  \{\0\} )\le u_{\bn}(\tau) \mid  \|\BX(\0) \|>
  u_{\bn}(\tau)) \,.
\end{equation}
Furthermore,  we can write for $\bn$ large enough, 
\begin{align*}
&\Prob(M_X(\rec^+_{\br_{\bn}}\backslash
  \{\0\} )\le u_{\bn}(\tau) \mid  \|\BX(\0)\| >
  u_{\bn}(\tau)) \\
= & \Prob\left( \frac{1}{u_\bn(\tau)}M_X(\rec^+_{M\ones}\backslash
  \{\0\} ) \leq   1\bigg|
  \|\BX(\0)\|>u_\bn(\tau)\right) \\
- & \Prob\left( M_X(\rec^+_{M\ones}\backslash
  \{\0\} ) \leq   u_{\bn}(\tau), \, M_X(\rec^+_{\br_{\bn}}\backslash
  \{\0\} )> u_{\bn}(\tau) \bigg|   \|\BX(\0)\|>u_\bn(\tau)\right)\,.
\end{align*}
By \eqref{e:corners.tf.2}, letting first $\bn\to\binfty$ and then
$M\to\infty$ gives us  
$$
\Prob\left(\sup_{\bt \geq\0, \, \bt\not=\0
          }\|\BY(\bt)\|\le 1\right) \leq
\liminf_{\bn \to \binfty}\, \Prob(M_X(\rec^+_{\br_{\bn}}\backslash
  \{\0\} )\le u_{\bn}(\tau) \mid  \|\BX(\0) \|>
  u_{\bn}(\tau)) \,,
$$
which, in conjunction with \eqref{e:corners.tf.1}, proves both
existence of $\theta_{\rm run, \0}$ and the fact that it is equal to
$\theta_{\rm tf, \0}$. 
\end{proof}

Another version of a tail field based extremal index arises naturally
in limit theorems discussed in the next section. Let $\prec$ be an
invariant order on $\Z^k$. 

\begin{definition}  \label{def:tf.half}
Let $\BXRF$ be a stationary jointly regularly varying $\R^d$-valued
random field $\BXRF$ with the tail field $\BYRF$. Its {\bf half space
  extremal index} $\theta_{\rm half}$ is
$$
\theta_{\rm half} = \Prob\left( \sup_{\bt \prec\0} \|\BY(\bt)\|\leq 1\right) \,.
$$
\end{definition}

We will see in the next section that, under condition
\eqref{e:corners.tf.2}, the block extremal index exists and equals the 
half space   extremal index. A corollary of this is that the half
space   extremal index is independent of the invariant order $\prec$
as long as \eqref{e:corners.tf.2} holds for some array $(\br_{\bn})$. 

\begin{example} \label{ex:corners}
A simple class of models is that of max-moving averages with local
interaction. We consider one such model with two-dimensional time. Let 
$a_{-1,-1},a_{-1,1},a_{1,1},a_{1,-1}$ be numbers in $[0,1]$. 
Starting  with i.i.d. standard Fr\'echet(1) random variables 
$(Z(\bt) : \bt \in \Z^2)$, we define a stationary random field
$(X(\bt):\bt \in \Z^2)$ by 
$$
X(\bt) = 
\max\Bigl\{Z(\bt), a_{-1,-1}Z(\bt-\ones),a_{-1,1}Z(t_1-1,t_2+1),
a_{1,1}Z(\bt+\ones), a_{1,-1}Z(t_1+1,t_2-1)\Bigr\}\,.
$$
If $F_Z$ denotes the c.d.f. of a standard Fr\'echet(1) random
variable, then for any $u>0$, 
\begin{align*}
	&\Prob(M_X(\rec^+_{\br_{\bn}}) \le u) = (F_Z(u))^{E(\br,\ba)}\,,
\end{align*}
where
\begin{align*}
E(\br,\ba)&=r_{n_1}r_{n_2}+3(a_{-1,-1}+a_{-1,1}+a_{1,1}+a_{1,-1})
+(r_{n_1}-2)\bigl( \max(a_{-1,-1},a_{1,-1})+\max(a_{1,1},a_{-1,1})
\bigr) \\
& +(r_{n_2}-2) \bigl( \max(a_{-1,-1},a_{-1,1})+\max(a_{1,1},a_{1,-1})
\bigr)\,,
\end{align*}
while 
\begin{align*}
\Prob(X(\0) \le u) =   (F_Z(u))^{1+(a_{-1,-1}+a_{-1,1}+a_{1,1}+a_{1,-1})}.
\end{align*}
By \eqref{eq:u} and \eqref{e:u1} we conclude that the classical
extremal index exists, and
$$
\theta_{\rm cl} = ( 1+s )^{-1}\,,
$$
where
$$
s= a_{-1,-1}+a_{-1,1}+a_{1,1}+a_{1,-1}\,,
$$
and by \eqref{eq:theta}, the block extremal index $\theta_{\rm b}$
also exists and is equal to the classical extremal index. 

It is also easy to compute the run extremal index. We perform the
computation for the corner determined by $\bi=\0$, and it can be done
analogously for the other corners. Notice that  
\begin{align*}
&\lim_{\bn \to \binfty}\Prob(M_X(\rec^+_{\br_{\bn}}\backslash \{\0\})
  > u_{\bn}(\tau) \mid X(\0) > u_{\bn}(\tau))  \\
&= \lim_{\bn \to \binfty}\Prob(M_X(\{ \ones,(2,0),(0,2),(2,2))
  > u_{\bn}(\tau) \mid X(\0) > u_{\bn}(\tau))  \\
&= \lim_{\bn \to \binfty} \Bigl[ \Prob(Z(\0)>u_{\bn}(\tau) \mid X(\0) > u_{\bn}(\tau))
 \Prob(  X(\ones)  > u_{\bn}(\tau) \mid Z(\0) > u_{\bn}(\tau)\bigr) \\
&+ \Prob(a_{-1,1}Z((-1,1))>u_{\bn}(\tau) \mid X(\0) > u_{\bn}(\tau))
 \Prob(  X((0,2))  > u_{\bn}(\tau) \mid a_{-1,1}Z((-1,1)) >
  u_{\bn}(\tau)\bigr) \\
&+ \Prob(a_{1,1}Z(\ones)>u_{\bn}(\tau) \mid X(\0) > u_{\bn}(\tau))
 \Prob(   X(\ones)  > u_{\bn}(\tau) \mid a_{1,1}Z(\ones) >
  u_{\bn}(\tau)\bigr) \\
&+ \Prob(a_{1,-1}Z((1,-1))>u_{\bn}(\tau) \mid X(\0) > u_{\bn}(\tau))
 \Prob(  X((2,0))  > u_{\bn}(\tau) \mid a_{1,-1}Z((1,-1)) >
  u_{\bn}(\tau)\bigr) \Bigr]\\
&= \frac{1}{1+s}  \lim_{\bn \to \binfty} \Prob(  a_{-1,-1}Z(\0)  >
  u_{\bn}(\tau) \mid Z(\0) > u_{\bn}(\tau)\bigr) \\
&+ \frac{a_{-1,1}}{1+s}  \lim_{\bn \to \binfty} \Prob(  a_{-1,-1}Z((-1,1))  >
  u_{\bn}(\tau) \mid a_{-1,1} Z((-1,1)) > u_{\bn}(\tau)\bigr) 
+ \frac{a_{1,1}}{1+s}   \\
&+ \frac{a_{1,-1}}{1+s}  \lim_{\bn \to \binfty} \Prob(  a_{-1,-1}Z((1,-1))  >
  u_{\bn}(\tau) \mid a_{1,-1} Z((1,-1)) > u_{\bn}(\tau)\bigr) \\
&= (1+s)^{-1}\Bigl[ a_{-1,-1} + \min(a_{-1,1}, a_{-1,-1}) +  a_{1,1} + \min(a_{1,-1}, a_{-1,-1})
\Bigr]\,,
\end{align*}
 which equals, by definition, to $1-\theta_{\rm run,\0}$. 

Choosing $a_{-1,-1}=.1,a_{-1,1}=.7,a_{1,1}=.6,a_{1,-1}=.1$ results in
$\theta_{\rm run,\0}=.64,
\theta_{\rm run, \ones}=.44, \theta_{\rm run, (0,1)}=.4, \theta_{\rm
  run, (1,0)}=.6$, so the run
extremal index is different at all 4 corners. In this case
also $\theta_{\rm cl}=.4$. However, taking the equal weight mixture of
the above model with the model corresponding to
$a_{-1,-1}=.6,a_{-1,1}=.2,a_{1,1}=.6,a_{1,-1}=.1$ results in all 5
different indices: $\theta_\0=.52,
\theta_\ones=.42, \theta_{(0,1)}=.56, \theta_{(1,0)}=.7$ and
$\theta_{\rm cl}=.4$. 

Finally, because of the local interaction, condition
\eqref{e:corners.tf.2} holds in this case for any $(\br_{\bn})$
increasing to $\bm \infty$ such that $\br_\bn   /\bn \to \0$, and so
by Proposition \ref{pr:corner.tail}, the tail field extremal indices
coincide with the run extremal indices. 
\end{example}

\medskip
\section{Extremal index and limit theorems for point processes} \label{sec:pp}
Armed with the understanding of the spatial extremal indices
developed in the previous section, we now proceed to study the
extremal clusters.  

Let $\BXRF$ be an $\R^d$-valued stationary random field, jointly
regularly varying with index $\alpha > 0$, and let $\BYRF$,
$\BThetaRF$ be its associated tail field and spectral field, 
respectively. Let, once again, $(\br_{\bn})$ and $(u_{\bn}(\tau))$  be
arrays such that $\br_\bn  /\bn \to \0$, and \eqref{eq:u} holds for 
$\tau>0$. Consider the spatial point process (on $(\Rbar)^d$, from
which we remove the origin) defined by 
\begin{align}
C_\bn=\sum_{\bt \in
  \rec^+_{\br_{\bn}}}\delta_{u_{\bn}(\tau)^{-1}\BX(\bt)}. \label{eq:cluster}
\end{align}
We call it the cluster process,  and we 
are interested in the weak limit of the conditional law of
$C_{\bn}$, given that it does not vanish, i.e. given the event that
$M_X(\rec^+_{\br_{\bn}}) > u_{\bn}(\tau)$. We view the weak limit of the
cluster process as describing, asymptotically, a single extreme
cluster of the random field. Theorem \ref{thm:cluster} describes the
latter under the following assumption, which implies, at once, the
condition of Proposition \ref{pr:corner.tail} for every corner of the
hypercube. 
\begin{condition}\label{cond:vanish}
For any array
$(u_{\bn}(\tau))$ satisfying  
\eqref{eq:u} for some $\tau>0$ and some array $(\br_{\bn})$
increasing to $\bm \infty$ such that $\br_\bn   /\bn \to \0$,
	\begin{align}
	\lim_{M \to \binfty}\limsup_{\bn \to
          \binfty}\Prob\left(M_X(\rec_{\br_{\bn}} 
\backslash \rec_{M\ones}) > u_{\bn} (\tau)\mid
          \|\BX(\0)\|>u_{\bn}(\tau)\right) = 0. \label{eq:vanish} 
	\end{align}
\end{condition}

Let $\prec$ be an arbitrary invariant order  on $\Z^k$.  The argument
in the following theorem follows a logic similar to that in Theorem
4.3 of \cite{basrak:segers:2009}. 
\begin{theorem}\label{thm:cluster}
	Let $\BXRF$ be a jointly regularly varying with index $\alpha
        > 0$, $\R^d$-valued stationary random field, satisfying
        Condition \ref{cond:vanish}. Then
        $\Prob(\lim_{\|\bt\|_{\infty}  \to \infty}\|\BY(\bt)\| =
        0)=1$. Moreover, the block extremal index
        $\theta_{\rm b}$ exists, is positive, and 
\begin{equation} \label{e:comp.th}
\theta_{\rm b}=\theta_{\rm half} =\E\left[\max_{\bt \succeq
    \0}\|\BTheta(\bt)\|^\alpha-\max_{\bt \succ
    \0}\|\BTheta(\bt)\|^\alpha\right]. 
\end{equation} 
Furthermore,  the conditional law of $C_{\bn}$ converges weakly in the
space of Radon measures on $(\Rbar)^d\setminus\{\0\}$ to the conditional law
of the point process  
\begin{align}
C=\sum_{\bt \in \Z^k}\delta_{\BY(\bt)} \label{eq:dist_C} 
\end{align}  
given that $\max_{\bt \prec \0} \|\BY(\bt)\| \le 1$. 
The Laplace functional of C under this conditional law can be expressed as 
	\begin{align} 
	\Psi_{C}(f)=&\E\left[\exp \left\{-\sum_{\bt \in \Z^k} f(\BY(\bt))  \right\} \,\middle|\, \max_{\bt \prec \0} \|\BY(\bt)\| \le 1\right]\nonumber\\
	=&\theta_{\rm half}^{-1}\int_{0}^{\infty}\E\left[\exp \left\{ -\sum_ {\bt \succeq \0}f(y\BTheta(\bt))\right\}\1\left(y\max_{\bt \succeq \0}\|\BTheta(\bt)\|>1\right)\right.\nonumber\\
	&\left.-\exp \left\{ -\sum_{\bt\succ\0}f(y\BTheta(\bt))\right\}\1\left(y\max_{\bt \succ \0}\|\BTheta(\bt)\|>1\right)\right]\,d(-y^{-\alpha}) \label{eq:laplace_C}
	\end{align}
	for any nonnegative continuous $f$ on $(\Rbar)^d\setminus\{\0\}$ with
        a compact support. 
\end{theorem}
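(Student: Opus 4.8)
The plan is to decompose the theorem into four linked assertions and prove them in sequence, leaning on the change-of-time identity \eqref{e:shift.2a}, Lemma \ref{l:anja}, and the equivalence in Theorem \ref{t:tanja}. First I would show that $\BY$ vanishes at infinity a.s. The natural approach is to use Condition \ref{cond:vanish}: by the defining convergence \eqref{eq:lawY}, the conditional probability $\Prob(M_X(\rec_{\br_\bn}\setminus\rec_{M\ones})>u_\bn(\tau)\mid\|\BX(\0)\|>u_\bn(\tau))$ converges, as $\bn\to\binfty$, to $\Prob(\sup_{\bt\notin\rec_{M\ones}}\|\BY(\bt)\|>1)$ restricted to the growing windows; letting $M\to\infty$ and using \eqref{eq:vanish} forces $\Prob(\sup_{\|\bt\|_\infty>M}\|\BY(\bt)\|>1)\to0$. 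Applying this to the scaled field $r\BY$ for every rational $r>0$ (via the scaling built into the tail measure) and using that $\|\BY(\0)\|$ has a Pareto tail upgrades this to $\|\BY(\bt)\|\to0$ a.s. as $\|\bt\|_\infty\to\infty$.

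Second, I would identify the block extremal index with the half-space extremal index. The strategy mirrors \cite{basrak:segers:2009}: condition \eqref{eq:theta} defining $\theta_{\rm b}$ can be rewritten, after splitting the maximum over $\rec^+_{\br_\bn}$ according to the last exceedance in the invariant order $\prec$ and using stationarity to move each exceedance to the origin, as a telescoping sum whose limit is $\Prob(\sup_{\bt\prec\0}\|\BY(\bt)\|\le1)=\theta_{\rm half}$. Here the vanishing-at-infinity property just established guarantees the sums are well-behaved and that an anchoring point exists a.s., while condition \eqref{e:corners.tf.2} (implied by Condition \ref{cond:vanish}) controls the contribution of far-away exceedances. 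The equality with the $\BTheta$-expression in \eqref{e:comp.th} then follows from the tail-field/spectral-field decomposition \eqref{e:decomp}, writing $\BY=\|\BY(\0)\|\,\BTheta$ and applying \eqref{e:shift.2a} to convert the half-space event into the displayed difference of expected maxima of $\|\BTheta(\bt)\|^\alpha$; positivity of $\theta_{\rm b}$ is immediate since the two maxima differ by at least $\|\BTheta(\0)\|^\alpha=1$ on the event that the supremum is attained at $\0$.

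Third, for the weak convergence of the cluster process $C_\bn$, I would argue that conditionally on non-vanishing, $C_\bn$ is asymptotically governed by a single extremal block, which by stationarity and \eqref{eq:lawY} converges to $C=\sum_{\bt}\delta_{\BY(\bt)}$ conditioned on $\max_{\bt\prec\0}\|\BY(\bt)\|\le1$; the conditioning selects the block whose "first" exceedance in the order $\prec$ sits at the origin, so that no mass is double-counted across blocks. Convergence of Radon measures on $(\Rbar)^d\setminus\{\0\}$ is equivalent to convergence of Laplace functionals against nonnegative continuous compactly supported $f$, so it suffices to pass the functional $\E[\exp\{-\sum_\bt f(u_\bn(\tau)^{-1}\BX(\bt))\}\mid\cdot]$ to the limit; the compact support of $f$ (bounded away from $\0$) together with the vanishing of $\BY$ at infinity makes the sum effectively finite and legitimizes the limit interchange. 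Finally, to obtain the explicit formula \eqref{eq:laplace_C}, I would insert the decomposition $\BY=y\BTheta$ with $y=\|\BY(\0)\|$ Pareto-distributed and independent of $\BTheta$, apply the change-of-time identity \eqref{e:shift.2a} to re-anchor the spectral field at its first exceedance in the order $\prec$, and express the conditional law as the displayed difference of two exponential-times-indicator terms integrated against $d(-y^{-\alpha})$, the difference arising exactly from the telescoping between "$\bt\succeq\0$" and "$\bt\succ\0$."

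The main obstacle I anticipate is the second and third steps taken together: rigorously justifying that the conditional cluster law isolates a single block and that the anchoring via the invariant order produces no overcounting. Concretely, the delicate point is interchanging the limit $\bn\to\binfty$ with the infinite sum over the invariant-order anchor positions while controlling the boundary blocks of the hypercube $\rec^+_{\br_\bn}$; this is where Condition \ref{cond:vanish}, in the uniform form \eqref{e:corners.tf.2}, must be invoked to show the truncation error vanishes as $M\to\infty$ uniformly in $\bn$. Once this uniform negligibility is in hand, the algebra converting the telescoped limit into \eqref{e:comp.th} and the Laplace functional \eqref{eq:laplace_C} via \eqref{e:shift.2a} is routine.
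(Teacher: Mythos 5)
Your architecture matches the paper's proof almost exactly: vanishing of $\BY$ at infinity from Condition \ref{cond:vanish} plus \eqref{eq:lawY}; anchoring the block at the $\prec$-last exceedance; telescoping; and the change-of-time identity \eqref{e:shift.1a}/\eqref{e:shift.2a} to pass from $\BY$ to $\BTheta$ and obtain \eqref{eq:laplace_C} (from which \eqref{e:comp.th} follows with $f=0$). However, there is one ingredient you never supply, and it is load-bearing: an \emph{a priori} positive lower bound on
$\theta_{\bn}=\Prob(M_X(\rec^+_{\br_{\bn}})>u_{\bn}(\tau))\big/\bigl(\prod_{\ell}r_{n_\ell}\bigr)\Prob(\|\BX(\0)\|>u_{\bn}(\tau))$.
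The paper obtains $\liminf_{\bn}\theta_{\bn}\ge 2^{-1}M^{-k}>0$ by tiling $\rec^+_{\br_{\bn}}$ with a grid of spacing $M$ and decomposing according to the last grid point carrying an exceedance. This bound is exactly what makes your "delicate point" go through: every error term in the telescoping comparison (the boundary blocks of the hypercube and the far-away exceedances controlled by Condition \ref{cond:vanish}) is normalized by $\Prob(M_X(\rec^+_{\br_{\bn}})>u_{\bn}(\tau))$, and without a matching lower bound of order $\bigl(\prod_\ell r_{n_\ell}\bigr)\Prob(\|\BX(\0)\|>u_{\bn}(\tau))$ you cannot conclude that those errors vanish. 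It is also what lets one run the argument along subsequences on which $\theta_{\bn}$ converges (existence of the limit being part of the conclusion, not a hypothesis) and identify every subsequential limit as $\theta_{\rm half}$ by taking $f=0$.

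Relatedly, your positivity argument for $\theta_{\rm b}$ is incomplete. The integrand in \eqref{e:comp.th} is
$\max_{\bt\succeq\0}\|\BTheta(\bt)\|^\alpha-\max_{\bt\succ\0}\|\BTheta(\bt)\|^\alpha=\bigl(1-\max_{\bt\succ\0}\|\BTheta(\bt)\|^\alpha\bigr)^{+}$,
so positivity of the expectation requires $\Prob\bigl(\max_{\bt\succ\0}\|\BTheta(\bt)\|<1\bigr)>0$; the event that the supremum of $\|\BTheta\|$ is attained at $\0$ does not deliver this, since the supremum could simultaneously be attained at some $\bt\succ\0$. In the paper, positivity is not read off the $\BTheta$-formula at all: it comes for free from the grid lower bound above together with the identification $\lim\theta_{\bn}=\theta_{\rm half}$. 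I would either import that lower bound or give a separate argument (in the spirit of Lemma \ref{l:anja}, using the $\prec$-first argmax and the change-of-time property) that $\Prob\bigl(\max_{\bt\succ\0}\|\BTheta(\bt)\|<1\bigr)>0$. The remaining steps of your plan, including the scaling trick for upgrading the level-$1$ statement to a.s. vanishing of $\BY$ and the use of compact support of $f$ to make the sums effectively finite, are consistent with the paper's argument.
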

\begin{proof}
For any $v >0$, by Condition  \ref{cond:vanish} and the regular
variation of $\|\BX(\0)\|$, it holds that 
	\begin{align*}
	\lim_{M \to \infty}\limsup_{\bn \to
          \binfty}\Prob\left(M_X(\rec_{\br_{\bn}} 
\backslash \rec_{M\ones}) > u_{\bn}(\tau)v \mid \|\BX(\0)\|>u_{\bn}(\tau)\right) = 0.
	\end{align*}
	Therefore, for any $\epsilon >0$ and $v > 0$, there  exists
        $M>0$ such that for all $K>M$, 
	\begin{align*}
	\Prob\left(M_Y(\rec_{K\ones}\backslash \rec_{M\ones})>v \right) \le \epsilon.
	\end{align*}
	This implies that $\Prob(\lim_{\|\bt\|_{\infty}  \to
          \infty}\|\BY(\bt)\| = 0)=1$. Next, choose an integer $M$ so large that 
$$
\limsup_{\bn \to
          \binfty}\Prob\left(M_X(\rec_{\br_{\bn}} 
\backslash \rec_{M\ones}) > u_{\bn} (\tau)\mid
          \|\BX(\0)\|>u_{\bn}(\tau)\right) \leq 1/2\,.
$$	
	Let $\gamma_\ell=  \lfloor (r_{\bn})_\ell/M \rfloor, \,
        \ell=1,\ldots, k$, and fit into the hypercube
        $\rec^+_{\br_{\bn}}$ the $\prod_{\ell = 1}^k \gamma_\ell$
        smaller hypercubes 
with $M$ points on each side. We decompose the event that a value 
exceeding $u_{\bn}(\tau)$ is attained at one of the points of the
resulting grid according to the last
point of the grid (in the lexicographic order) at which a value
exceeding $u_{\bn}(\tau)$ is attained. For a point $M\bp$ on this
grid, let $A_{M\bp}$ denote the set of the points of the grid larger than
$M\bp$. By stationarity,
	\begin{align*}
	&\Prob(M_X(\rec^+_{\br_{\bn}})>u_{\bn}(\tau)) \\
	\ge &\sum_{p_1=0}^{\gamma_1-1}\cdots\sum_{p_k=0}^{\gamma_k-1} 
\Prob\left(\|\BX(M\bp)\|>u_{\bn}(\tau),M_X(A_{M\bp})\le u_{\bn}(\tau)\right)\\
	= &\sum_{p_1=0}^{\gamma_1-1}\cdots\sum_{p_k=0}^{\gamma_k-1}
            \left[  \Prob\left(\|\BX(M\bp)\|>u_{\bn}(\tau)\right)-
            \Prob\left(\|\BX(M\bp)\|>u_{\bn}(\tau),M_X(A_{M\bp})>
            u_{\bn}(\tau)\right)\right]\\ 
	\ge & \left( \prod_{\ell = 1}^k \gamma_{\ell} \right) \left[\Prob(\|\BX(\0)\|>u_{\bn}(\tau)) -\Prob\left(\|\BX(\0)\|>u_{\bn}(\tau),M_X(\rec_{\br_{\bn}} \backslash \rec_{M\ones})> u_{\bn}(\tau)\right)\right]\,, 
	\end{align*}
 so
\begin{align} \label{e:lower.b.th}
 \liminf_{n\to\infty}  \frac{\Prob(M_X(\rec^+_{\br_{\bn}})>u_{\bn}(\tau))}{\left(\prod_{\ell
          = 1}^k r_{n_\ell}\right)\Prob(\|\BX(\0)\|>u_{\bn}(\tau))}  
\geq 2^{-1}M^{-k}>0,.
\end{align}

Next, we decompose the event $M_X(\rec^+_{\br_{\bn}}) > u_{\bn}(\tau)$
using the  order $\prec$,  we have 
	\begin{align*}
	&\E\left[ \exp \left\{ -\sum_{\bi \in \rec^+_{\br_{\bn}}} f(u_{\bn}(\tau)^{-1}\BX(\bi))\right\}\1(M_X(\rec^+_{\br_{\bn}})>u_{\bn}(\tau))	\right]\\
	=& \sum_{\bt \in \rec^+_{\br_{\bn}}}\E\left[ \exp \left\{ -\sum_{\bi \in \rec^+_{\br_{\bn}}} f(u_{\bn}(\tau)^{-1}\BX(\bi))\right\}\1\left(\sup_{\bs \prec \bt,\bs \in \rec^+_{\br_{\bn}}} \|\BX(\bs)\| \le u_{\bn}(\tau)<\|\BX(\bt)\|\right)\right],
	\end{align*}
with the convention that the supremum over the empty set is defined to
be equal to zero. Denote
$$
\theta_{\bn}=
\frac{\Prob(M_X(\rec^+_{\br_{\bn}})>u_{\bn}(\tau))}{\left(\prod_{\ell
          = 1}^k r_{n_\ell}\right)\Prob(\|\BX(\0)\|>u_{\bn}(\tau))}
    \,,
$$
so that, if the array $(\theta_{\bn})$ has a limit as $\bn\to\binfty$,
the limit is the block extremal index. It follows from
\eqref{e:lower.b.th} that every subsequential limit of this array is
strictly positive. 
Fix  $\bbm \in \N^k$, and choose $\bn$ large
enough so that $\br_{\bn} \ge 2\bbm-\ones$. Then 
	\begin{align}
	&\left|\E\left[ \exp \left\{ -\sum_{\bi \in \rec^+_{\br_{\bn}}} f(u_{\bn}(\tau)^{-1}\BX(\bi))\right\} \,\middle|\, M_X(\rec^+_{\br_{\bn}})>u_{\bn}(\tau)	\right]\right.\nonumber\\
	& -\theta_{\bn}^{-1}\left.\E\left[ \exp \left\{ -\sum_{\bi \in
          \rec_{\bbm}} f(u_{\bn}(\tau)^{-1}\BX(\bi))\right\}
          \1\left(\max_{\bt \prec \0, \bt \in \rec_{\bbm}}
          \|\BX(\bt)\| \le u_{\bn}(\tau)\right) \,\middle|\,
          \|\BX(\0)\|>u_{\bn}(\tau)\right]\right| \nonumber \\
	\le &\frac{1}{\Prob(M_X(\rec^+_{\br_{\bn}})>u_{\bn}(\tau))}
             \label{eq:diff}  \\
             & \sum_{\bt \in \rec^+_{\br_{\bn}}}\left|\E\left[ \exp
              \left\{ -\sum_{\bi \in \rec^+_{\br_{\bn}}}
              f(u_{\bn}(\tau)^{-1}\BX(\bi))\right\}\1\left(\max_{\bs \preceq
              \bt, \bs \in \rec^{+}_{\br_{\bn}}} \|\BX(\bs)\| \le
              u_{\bn}(\tau)<
              \|\BX(\bt)\|\right)\right]\right. \nonumber 
          \\
	&-\left.\E\left[ \exp \left\{ -\sum_{\bi \in \rec_{\bbm}} f(u_{\bn}(\tau)^{-1}\BX(\bi))\right\} \1\left( \max_{\bs \prec \0, \bs \in \rec_{\bbm}} \|\BX(\bs)\| \le u_{\bn}(\tau) < \|\BX(\0)\|\right)\right]\right|\,. \nonumber
	\end{align}
Let  $\calI =[\bbm-\ones : \br_{\bn}-\bbm]$. By stationarity and
invariance of the order, 
	\begin{align*}
	&\sum_{\bt \in \calI}\left|\E\left[ \exp \left\{ -\sum_{\bi
          \in \rec^+_{\br_{\bn}}}
          f(u_{\bn}(\tau)^{-1}\BX(\bi))\right\}\1\left(\max_{\bs \preceq
          \bt, \bs \in \rec^{+}_{\br_{\bn}}} \|\BX(\bs)\| \le u_{\bn}(\tau)<
          \|\BX(\bt)\|\right)\right]\right.  \\
	&-\left.\E\left[ \exp \left\{ -\sum_{\bi \in \rec_{\bbm}} f(u_{\bn}(\tau)^{-1}\BX(\bi))\right\} \1\left( \max_{\bs \prec \0, \bs \in \rec_{\bbm}} \|\BX(\bs)\| \le u_{\bn}(\tau) < \|\BX(\0)\|\right)\right]\right| \nonumber\\
	=&\sum_{\bt \in \calI}\left|\E\left( \left[ \exp \left\{ -\sum_{\bi
           \in \rec^+_{\br_{\bn}}}
           f(u_{\bn}(\tau)^{-1}\BX(\bi))\right\}\1\left(\max_{\bs \preceq
           \bt, \bs \in \rec^{+}_{\br_{\bn}}} \|\BX(\bs)\| \le
           u_{\bn}(\tau)<\|\BX(\bt)\|\right)\right]\right.\right. \\
	&-\left.\left.\left[ \exp \left\{ -\sum_{\bi \in \rec_{\bbm}(\bt)}
          f(u_{\bn}(\tau)^{-1}\BX(\bi))\right\} \1\left( \max_{\bs \prec
          \bt, \bs \in \rec_{\bbm}(\bt)} \|\BX(\bs)\| \le u_{\bn}(\tau) <
          \|\BX(\bt)\|\right)\right]\right|\right)\,. 
	\end{align*}
Since $f$ vanishes in a neighbourhood of the origin, there is $0<v \le
1$ such that $f(\bx) = 0$ when $\|\bx\| \le v$. Therefore, for each fixed $\bt \in
\calI$, the difference in the sum above 
will be nonzero only if
$u_{\bn}(\tau)^{-1}\|\BX(\bs)\|> v$ for some $\bs \in
(\rec^+_{\br_{\bn}}\backslash \rec_{\bbm}(\bt))$. By stationarity, this sum is upper
bounded by 
	\begin{align*}
	  & \sum_{\bt \in \calI}\Prob\left(M_X(\rec^+_{\br_{\bn}}\backslash\rec_{\bbm}(\bt))>u_{\bn}(\tau)v, \|\BX(\bt)\|>u_{\bn}(\tau)\right)\nonumber\\
	\le &\left(\prod_{\ell=1}^k r_{n_\ell} \right) \Prob\left(M_X(\rec_{\br_{\bn}}\backslash \rec_{\bbm})>u_{\bn}(\tau)v, \|\BX(\0)\|>u_{\bn}(\tau)\right).
	\end{align*}
	On the other hand, for each $\bt \in \rec^+_{\br_{\bn}} \backslash \calI$, the summand is upper bounded by
	\begin{align*}
	\E\left[ \exp \left\{ -\sum_{\bs \in \rec_{\bbm}} f(u_{\bn}(\tau)^{-1}\BX(\bs))\right\} \1\left( \max_{\bs \prec \0, \bs \in \rec_{\bbm}} \|\BX(\bs)\| \le u_{\bn}(\tau) < \|\BX(\0)\|\right)\right] \le \Prob(\|\BX(\0)\|>u_{\bn}(\tau)).
	\end{align*}
Combining the two parts, we see that the difference in \eqref{eq:diff}
does not exceed  
	\begin{align*}
	&\frac{1}{\Prob(M_X(\rec^+_{\br_{\bn}})>u_{\bn}(\tau))} \left[
          \left(\prod_{\ell=1}^k r_{n_\ell} \right)
          \Prob\left(M_X(\rec_{\br_{\bn}}\backslash
          \rec_{\bbm})>u_{\bn}(\tau)v,
          \|\BX(\0)\|>u_{\bn}(\tau)\right)\right. \\
        &\left. +{\rm Card}(\rec^+_{\br_{\bn}} \backslash
          \calI)\Prob(\|\BX(\0)\|>u_{\bn}(\tau))\right]\nonumber\\ 
	&=\frac{1}{\theta_{\bn}}\left[\Prob\left(M_X(\rec_{\br_{\bn}}\backslash \rec_{\bbm})>u_{\bn}(\tau)v \mid \|\BX(\0)\|>u_{\bn}(\tau)\right)+\frac{{\rm Card}(\rec^+_{\br_{\bn}} \backslash \calI)}{\prod_{\ell=1}^k r_{n_l}}\right] \to 0
	\end{align*}
	as $\bn \to \binfty, \bbm \to \binfty$, where we have used
        \eqref{e:lower.b.th}. Therefore, for any sequence $(\bn_k)$
        converging to $\binfty$, 
        along which $\theta_\bn$ has a (positive) limit, say, $L$, 
	\begin{align*}
	&\lim_{k \to \infty}\E\left[ \exp \left\{ -\sum_{\bt\in
          \rec^+_{\br_{\bn_k}}}
          f(u_{\bn_k}(\tau)^{-1}\BX(\bt))\right\}\,
\middle|\, M_X(\rec^+_{\br_{\bn_k}})>u_{\bn_k}(\tau)	\right]\\
	=&\lim_{\bbm \to \binfty}\lim_{k \to  \infty}\theta_{\bn_k}^{-1}
\E \Biggl[
 \exp \left\{ -\sum_{\bt \in \rec_{\bbm}}
           f(u_{\bn_k}(\tau)^{-1}\BX(\bt))\right\} \\
&\hskip 1in\1\left( \max_{\bt
           \prec \0, \bt \in \rec_{\bbm}}\|\BX(\bt)\|\le
           u_{\bn_k}(\tau) \right)\,\Bigg|\,
           \|\BX(\0)\|>u_{\bn_k}(\tau)\Biggr]
\\
	=&L^{-1}\E\left[ \exp \left\{ -\sum_{\bt \in \Z^k} f(\BY(\bt))\right\} \1\left( \max_{\bt \prec \0}\|\BY(\bt)\|\le 1 \right)\right].
	\end{align*}
	Choosing $f = 0$ gives us 
	\begin{align*}
	L = \Prob\left(\max_{\bt \prec \0}\|\BY(\bt)\| \le
          1\right), 
	\end{align*}
which implies several things. First of all, it implies that all
subsequential limits $L$ are equal, so the array $(\theta_{\bn})$ has
a limit  as $\bn\to\binfty$. Therefore the block extremal index
exists and is positive, and $\theta_{\rm   b}=\theta_{\rm half}$. This
also proves the convergence of the Laplace transform of the cluster
process computed under its conditional law: 
\begin{align*}
	\lim_{\bn \to \binfty} &\E\left[ \exp \left\{ -\sum_{\bi \in
  \rec^+_{\br_{\bn}}} f(u_{\bn}(\tau)^{-1}\BX(\bi))\right\}
  \,\middle|\, M_X(\rec^+_{\br_{\bn}})>u_{\bn}(\tau)	\right]  \\ 
= &\E\left[ \exp \left\{ -\sum_{\bt \in \Z^k} f(\BY(\bt))\right\}
  \,\middle|\, \max_{\bt \prec \0}\|\BY(\bt)\| \le 1 \right] 
\end{align*}
for any nonnegative continuous $f$ on $(\Rbar)^d\setminus\{\0\}$ with
        a compact support. This, of course, proves the stated weak
        convergence of  the conditional laws of the cluster process.  
	
One shows that the Laplace transform $\Psi_C(f)$ of
 the limiting point process computed under its conditional law 
has the expression in the right hand side of
\eqref{eq:laplace_C} using the same argument as in  
\cite{basrak:segers:2009},  using the invariant order
$\prec$.  Finally, \eqref{e:comp.th} follows from \eqref{eq:laplace_C}
applied to the zero function. 
\end{proof}

\begin{remark} \label{rk:more.stuff}
It is elementary to check that, if $f(\bx)=0$ whenever $\|\bx\|\leq
1$, then the obvious analogue of an alternative expression (4.6) in
\cite{basrak:segers:2009} for the Laplace transform $\Psi_C(f)$ holds
as well. Furthermore, under both Condition \ref{cond:vanish} and the
asymptotic independence of extremal clusters condition 
	\begin{align*}
	\E\left[\exp \left\{-\sum_{\bt\in \rec^+_{\bn}}
          f(u_{\bn}(1)^{-1}\BX(\bt))\right\}\right]-\left(\E\left[\exp
          \left\{-\sum_{\bt \in \rec^+_{\br_{\bn}}}
          f(u_{\bn}(1)^{-1}\BX(\bt))
          \right\}\right]\right)^{\prod_{\ell = 1}^k \lfloor {n_\ell}/r_{n_\ell}\rfloor} \to 0
	\end{align*} 
for every continuous function $f$ with a compact support, 
one also obtains a picture of exceedance clusters on a larger
scale, as in Theorem 4.5 {\it ibid.} For the point process 
\begin{align*}
N_n=\sum_{\bt\in \rec^+_{\bn}}\delta_{u_{\bn}(1)^{-1}\BX(\bt)}
\end{align*}
one obtains weak convergence in the
space of Radon measures on $(\Rbar)^d\setminus\{\0\}$ to a cluster Poisson
point process whose restriction to the set $\{ \bx: \, \|\bx\|>a\}$,
$a>0$, has the representation
$$
 \sum_{i=1}^{P_a} \sum_{\bt\in\Z^k}
\delta_{a\BZ_{i}(\bt)}
\one\bigl( \|\BZ_{i}(\bt)\|>1\bigr)\,,
$$
where $\bigl( \BZ_{i}(\bt), \,  \bt\in\Z^k\bigr), \, i=1,2,\ldots$ are
i.i.d. copies of the single cluster limiting process in Theorem
\ref{thm:cluster}, independent of a mean $\theta_{\rm b}u^{-\alpha}$
Poisson random variable  $P_a$. A different, and very detailed, 
representation of the entire limiting point process is in
\cite{basrak:plannic:2018}. 
\end{remark}

\section{Brown-Resnick Random Fields} \label{sec:brf}
The tail field is a convenient formalism to describe the extremes of a
jointly regularly varying stationary random field. It is useful, in
particular, in describing the extremal clusters, and it can be used to
define versions of the extremal index. In order to make it concrete,
in this section, we focus on the class of the so-called Brown-Resnick
random fields. For simplicity we will keep the values of the field
one-dimensional, with the standard Fr\'echet marginal distributions. 

Let $(W(\bt): \bt \in \R^k)$ be a stationary increment (real-valued)
zero-mean Gaussian 
random field, with variance $\sigma^2(\bt)$ and variogram
$\gamma(\bt) =E(W(\bt)-W(\0))^2$, $\bt\in\R^k$. The stationarity of
the increments means that $E(W(\bt)-W(\bs))^2=\gamma(\bt-\bs)$ for all
$\bt,\bs\in\R^k$. Let $(W_i(\bt): \bt \in \R^k), i \in \N$ be
i.i.d. copies of this random field, independent of a  Poisson point process
$\sum_{i=1}^\infty \delta_{U_i}$ on $\R_+$ with intensity
$du/u^2$. The Brown-Resnick random field associated with the Gaussian
random field $(W(\bt): \bt \in \R^k)$ is defined by 
\begin{align}
	X(\bt) = \max_{i=1,2,\dots} U_i\exp\{W_{i}(\bt) -
  \sigma^2(\bt)/2\}\,. \label{eq:BRRF} 
	\end{align}
Since $E\bigl(\exp\{W(\bt) -  \sigma^2(\bt)/2\} \bigr)  =1$ for each $t$,
this is a well defined max-stable random field with the standard  Fr\'echet
marginal distributions; see \cite{dehaan:1984}. Furthermore, it is a
stationary random field (even when the Gaussian
random field $(W(\bt): \bt \in \R^k)$ itself is not stationary); see Theorem
2 and Remark 3 in \cite{kabluchko:schlather:dehaan:2009}. As any
max-stable random field with the standard Fr\'echet  marginal
distributions, the Brown-Resnick random field is multivariate regular
varying (with $\alpha=1$). This fact is also seen from the following
proposition, that computes the law of the tail field of this random
field. 

\begin{proposition} \label{thm:BR_Y}
	The Brown-Resnick random field $\XRF$ is multivariate
        regularly varying, and the finite-dimensional distributions of
        its tail field $\YRF$ can be computed by  
	\begin{align}
	&\Prob(Y(\bt_1)\leq y_1,\dots,Y(\bt_n)\leq y_n)\nonumber \\
	=&\E\left[ \max_{i=1,\dots,n} \left(\frac{1}{y_i}\exp\left\{{W(\bt_i)-\frac{\sigma^2(\bt_i)}{2}}\right\}, \exp\left\{{W(\0)-\frac{\sigma^2(\0)}{2}} \right\} \right)\right]\nonumber \\
	&- \E\left[ \max_{i=1,\dots,n} \frac{1}{y_i}\exp\left\{{W(\bt_i)-\frac{\sigma^2(\bt_i)}{2}}\right\} \right]	\label{eq:BR_Y_dist}
	\end{align}
for $\bt_1,\dots,\bt_n \in \Z^k$ and positive $y_1,\ldots, y_n$. In
particular, the marginal distributions of the tail field are given by 
	\begin{align}
	\Prob(Y(\bt)\leq y) = \Phi\left( \frac{2\ln y +
          \gamma(\bt)}{2\sqrt{\gamma(\bt)}} \right) -
          \frac{1}{y}\Phi\left( \frac{2\ln
          y-\gamma(\bt)}{2\sqrt{\gamma(\bt)}} \right)
          ,	\label{eq:BR_Yt} 
	\end{align}
for $\bt\in \Z^k$ and $y>0$. Here  $\Phi(\cdot)$ is the standard normal cdf.
\end{proposition}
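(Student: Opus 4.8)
The plan is to compute the limiting conditional law in \eqref{eq:lawY} explicitly and then invoke Theorem \ref{thm:tailfield}. Throughout write $V(\bt) = \exp\{W(\bt) - \sigma^2(\bt)/2\}$, so that $X(\bt) = \max_i U_i V_i(\bt)$ with $V_i$ the iid copies. First I would record the finite-dimensional distributions of the max-stable field: the marked points $(U_i,W_i)$ form a Poisson point process with intensity $u^{-2}\,du$ times the law of $W$, and a point produces an exceedance $U_i V_i(\bt_j) > a_j$ for some $j$ precisely when $U_i > \min_j a_j/V_i(\bt_j)$. Integrating the intensity over this region gives
\begin{equation*}
\Prob\bigl(X(\bt_j) \le a_j, \ j = 1,\dots,n\bigr) = \exp\left(-\E\left[\max_{j} \frac{V(\bt_j)}{a_j}\right]\right)
\end{equation*}
for positive $a_1,\dots,a_n$, the expectation being over a single copy of $W$. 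In particular, taking $n=1$ and using $\E[V(\0)]=1$ shows $\Prob(X(\0)\le a) = e^{-1/a}$, the standard Fr\'echet law, so $\Prob(X(\0)>x)\sim 1/x$.

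Next I would compute the conditional probability $\Prob(X(\bt_j)\le xy_j,\ j \mid X(\0)>x)$ for positive $y_j$. Writing the numerator as $\Prob(X(\bt_j)\le xy_j,\ j) - \Prob(X(\bt_j)\le xy_j,\ j,\ X(\0)\le x)$ and applying the formula above twice, the second time adjoining the point $\0$ at level $x$, the numerator equals
\begin{equation*}
\exp\left(-\frac1x\,\E\left[\max_j \frac{V(\bt_j)}{y_j}\right]\right) - \exp\left(-\frac1x\,\E\left[\max\left(\max_j \frac{V(\bt_j)}{y_j},\, V(\0)\right)\right]\right).
\end{equation*}
Both expectations are finite since $\E[V(\bt)]=1$, so the elementary expansion $e^{-c/x} = 1 - c/x + o(1/x)$ together with division by $\Prob(X(\0)>x)\sim 1/x$ yields the limit \eqref{eq:BR_Y_dist}. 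This establishes \eqref{eq:lawY} in the finite-dimensional sense; the degenerate case $\bt_1=\0$ gives $\Prob(Y(\0)>y)=y^{-1}$ for $y\ge 1$, so Theorem \ref{thm:tailfield} applies with $\alpha=1$ and delivers both the joint regular variation of $\XRF$ and the identification of $\YRF$ as its tail field.

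Finally, for the marginal formula \eqref{eq:BR_Yt} I would specialize \eqref{eq:BR_Y_dist} to $n=1$ and rewrite it, using $\max(a,b)-a=(b-a)^+$ and $\E[V(\bt)]=1$, as $\Prob(Y(\bt)\le y) = \E[(V(\0) - y^{-1}V(\bt))^+]$; splitting the positive part gives
\begin{equation*}
\Prob(Y(\bt)\le y) = \E\bigl[V(\0)\1(V(\0) > y^{-1}V(\bt))\bigr] - y^{-1}\,\E\bigl[V(\bt)\1(V(\0) > y^{-1}V(\bt))\bigr].
\end{equation*}
Each term is a log-normal weight times a Gaussian indicator, which I would evaluate by the Cameron--Martin change of measure: tilting by $V(\0)$ (respectively $V(\bt)$) shifts the mean of $W$ to $\Cov(W(\cdot),W(\0))$ (respectively $\Cov(W(\cdot),W(\bt))$) while preserving covariances, so that each term becomes the probability, under the shifted law, of the Gaussian event $W(\bt)-W(\0) < \ln y + (\sigma^2(\bt)-\sigma^2(\0))/2$. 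Since $W(\bt)-W(\0)$ has variance $\gamma(\bt)$, each reduces to a value of $\Phi$.

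I expect the main obstacle to be exactly this last step: tracking the two distinct mean shifts and verifying that the terms $\sigma^2(\bt)$ and $\sigma^2(\0)$ cancel, leaving a result expressed solely through $\gamma(\bt) = \sigma^2(\bt)+\sigma^2(\0)-2\Cov(W(\bt),W(\0))$. Carrying the arithmetic through, the two indicator probabilities become $\Phi\bigl((2\ln y + \gamma(\bt))/(2\sqrt{\gamma(\bt)})\bigr)$ and $\Phi\bigl((2\ln y - \gamma(\bt))/(2\sqrt{\gamma(\bt)})\bigr)$, which is precisely \eqref{eq:BR_Yt}. This cancellation reflects the known fact that the Brown--Resnick field is stationary and depends on $W$ only through its variogram, so I would treat the successful cancellation as a consistency check on the whole computation.
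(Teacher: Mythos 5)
Your proposal is correct and follows essentially the same route as the paper: both use the max-stable finite-dimensional formula $\Prob(X(\bt_j)\le a_j,\ j)=\exp\{-\E[\max_j V(\bt_j)/a_j]\}$, write the conditional probability as a difference of two such expressions, and extract the limit from the first-order expansion of the exponentials. The only difference is at the very end, where the paper dispatches the marginal formula \eqref{eq:BR_Yt} by citing known moment formulas for ordered bivariate lognormals, whereas you carry out the exponential-tilting computation explicitly --- and your arithmetic there (both mean shifts, and the reduction to the variogram) checks out.
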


\begin{proof}
	Let $V_i(\bt) = \exp\{W_i(\bt) - \sigma^2(\bt)/2\}$. Then for
        any finite set of points in $\Z^k$ and positive numbers, 
\begin{align}
	\Prob(X(\bt_1) \le x_1,\dots,X(\bt_n) \le x_n) &= \exp\left\{
       -\E\left[ \max \left(\frac{V(\bt_1)}{x_1},\dots,\frac{V(\bt_n)}{x_n}\right)
        \right] \right\}, 
\label{eq:X_dist}
\end{align}
so 
	\begin{align*}
		&\Prob(x^{-1}X(\bt_1) \le y_1,\dots,x^{-1}X(\bt_n) \le y_n \mid X(\0)>x)\\
		=&\frac{\Prob(X(\bt_1) \le xy_1,\dots,X(\bt_n) \le
                   xy_n) - \Prob(X(\bt_1) \le xy_1,\dots,X(\bt_n) \le
                   xy_n, X(\0)\leq x)}{P(X(\0)>x)}\\
		 &=\frac{\exp\left\{ -\E\left[ \max \left(\frac{V(\bt_1)}{xy_1},\dots,\frac{V(\bt_n)}{xy_n}\right)  \right] \right\}- \exp\left\{ -\E\left[ \max \left(\frac{V(\bt_1)}{xy_1},\dots,\frac{V(\bt_n)}{xy_n},\frac{V(\0)}{x}\right)  \right] \right\}}{1-e^{-1/x}}\\
		 \sim &x\left[\exp\left\{ -\frac{1}{x}\E\left[ \max_{i=1,\dots,n}\; \frac{1}{y_i}\exp\left\{{W(\bt_i)-\frac{\sigma^2(\bt_i)}{2}}\right\} \right] \right\}\right.\\
		 &\left.- \exp\left\{ -\frac{1}{x}\E\left[
                   \max_{i=1,\dots,n}
                   \left(\frac{1}{y_i}\exp\left\{{W(\bt_i)-\frac{\sigma^2(\bt_i)}{2}}\right\},
                   \exp\left\{{W(\0)-\frac{\sigma^2(\0)}{2}} \right\}
                   \right)\right] \right\}\right], 
	\end{align*}
which converges, as $x\to\infty$, to the expression in the right hand
side of \eqref{eq:BR_Y_dist}. In particular, the marginal
distributions satisfy 
	\begin{align*}
	 &\Prob(Y(\bt)\le y) \\
	 =&\E\left[ \max
            \left(\frac{1}{y}\exp\left\{{W(\bt)-\frac{\sigma^2(\bt)}{2}}\right\},
            \exp\left\{{W(\0)-\frac{\sigma^2(\0)}{2}} \right\}
            \right)\right] -
            \E\left[\frac{1}{y}\exp\left\{{W(\bt)-\frac{\sigma^2(\bt)}{2}}\right\}\right], 
	\end{align*}
and \eqref{eq:BR_Yt} follows by straightforward calculations with
lognormal random variables; see e.g. \cite{lien:1986}. 
\end{proof}

We will investigate the extremal behaviour of the restriction of
the Brown-Resnick random field to the integer grid $\Z^k$. The first
question is whether this field  satisfies  Condition
\ref{cond:vanish} (and, hence, also the
assumption \eqref{e:corners.tf.2}). The answer is given in the
following proposition. 
\begin{proposition} \label{pr:BR.local}
Let $\XRF$ be the Brown-Resnick random field \eqref{eq:BRRF} 
corresponding to a
stationary increment zero-mean Gaussian 
random field with variance $\sigma^2(\bt)$. Then $\XRF$ satisfied 
 Condition \ref{cond:vanish} if and only if the Gaussian field
 satisfies
\begin{equation} \label{e:Gauss.vanish}
\lim_{\bt\to\binfty, \, \bt\in \Z^k} \bigl( W_{i}(\bt) -
  \sigma^2(\bt)/2\bigr) = -\infty \ \ \text{a.s..}
\end{equation}
\end{proposition}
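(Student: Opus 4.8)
The plan is to characterize Condition \ref{cond:vanish} through the behaviour of the tail field $\YRF$, whose law was computed in Proposition \ref{thm:BR_Y}. The logic of Theorem \ref{thm:cluster} shows that Condition \ref{cond:vanish} is equivalent to the statement $\Prob(\lim_{\|\bt\|_\infty\to\infty}\|\BY(\bt)\|=0)=1$; since our field is scalar and nonnegative, this reads $Y(\bt)\to 0$ a.s.\ as $\|\bt\|\to\infty$. So the real task is to translate this decay of the tail field into the Gaussian condition \eqref{e:Gauss.vanish}. First I would produce a concrete representation of the tail field. From the expression \eqref{eq:BR_Y_dist} one recognizes that, conditionally on $\{X(\0)>x\}$ with $x\to\infty$, the single Poisson point realizing the maximum at $\0$ dominates, and its multiplicative profile is governed by a single copy $\exp\{W(\bt)-\sigma^2(\bt)/2\}$. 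Concretely I would argue that the tail field admits the representation
\begin{align*}
\bigl( Y(\bt):\,\bt\in\Z^k\bigr) \eid \Bigl( Y(\0)\,\exp\{W(\bt)-W(\0)-\tfrac12(\sigma^2(\bt)-\sigma^2(\0))\}:\,\bt\in\Z^k\Bigr)\,,
\end{align*}
where $Y(\0)$ is standard Pareto$(1)$ and independent of the Gaussian increment field; this is the natural analogue of the Brown-Resnick tail process and can be verified by checking that the finite-dimensional distributions match \eqref{eq:BR_Y_dist}.

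Granting this representation, the equivalence becomes transparent. Since $Y(\0)$ is a.s.\ finite and positive, $Y(\bt)\to 0$ a.s.\ if and only if the exponent $W(\bt)-W(\0)-\tfrac12(\sigma^2(\bt)-\sigma^2(\0))\to-\infty$ a.s.\ as $\|\bt\|\to\infty$. Now the stationarity of the increments lets me rewrite this exponent; writing $W(\bt)-\sigma^2(\bt)/2 = \bigl(W(\bt)-W(\0)-\tfrac12(\sigma^2(\bt)-\sigma^2(\0))\bigr) + \bigl(W(\0)-\sigma^2(\0)/2\bigr)$ and noting the last term is a fixed a.s.-finite random variable, I see that the exponent tends to $-\infty$ a.s.\ precisely when $W(\bt)-\sigma^2(\bt)/2\to-\infty$ a.s., which is exactly \eqref{e:Gauss.vanish}. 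Thus the Gaussian condition and the decay of the tail field coincide, and by the equivalence with Condition \ref{cond:vanish} drawn from Theorem \ref{thm:cluster}, the proposition follows.

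The main obstacle I anticipate is the rigorous passage from Condition \ref{cond:vanish}, which is a statement about the original field's maxima over growing annuli, to the clean a.s.\ decay of the tail field, and back. Theorem \ref{thm:cluster} supplies one direction (Condition \ref{cond:vanish} $\Rightarrow$ $\|\BY(\bt)\|\to 0$ a.s.), but for the converse I would need to show that the a.s.\ vanishing of the tail field, together with the explicit Gaussian structure, yields the uniform-in-$\bn$ control of $\Prob(M_X(\rec_{\br_\bn}\setminus\rec_{M\ones})>u_\bn(\tau)\mid \|X(\0)\|>u_\bn(\tau))$ required in \eqref{eq:vanish}. Here I expect the max-stable structure to be essential: the conditional exceedance probabilities over the annulus can be bounded, via a union bound and the explicit two-point and multi-point distributions \eqref{eq:X_dist}, by sums of $\E[\max(\cdot)]$ terms that, in the $x\to\infty$ limit, are precisely the tail-field quantities $\Prob(\sup_{\bt\in\rec_{K\ones}\setminus\rec_{M\ones}}Y(\bt)>v)$. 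Controlling the interchange of the limits in $\bn$ and $M$ uniformly is the delicate point; once the representation above is in hand, the dominated behaviour of $\exp\{W(\bt)-\sigma^2(\bt)/2\}$ under \eqref{e:Gauss.vanish} should give the needed uniform smallness.
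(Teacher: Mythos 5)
Your proposal takes a genuinely different route from the paper, and the route has a real gap in one direction. The paper never passes through the tail field: it uses inclusion--exclusion together with the explicit max-stable finite-dimensional distributions \eqref{eq:X_dist} to rewrite the conditional probability in \eqref{eq:vanish} exactly, reducing Condition \ref{cond:vanish} to the single statement
$\lim_{M\to\infty}\liminf_{\bn\to\binfty}\E\bigl[\max_{\bt\in\rec_{\br_{\bn}}\backslash\rec_{M\ones}\cup\{\0\}}V(\bt)-\max_{\bt\in\rec_{\br_{\bn}}\backslash\rec_{M\ones}}V(\bt)\bigr]=1$
with $V(\bt)=\exp\{W(\bt)-\sigma^2(\bt)/2\}$; both implications then follow from the pointwise bound $0\le\max_{A\cup\{\0\}}V-\max_A V\le V(\0)$ and dominated convergence (a reverse Fatou argument for the ``only if'' part). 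Your direction ``\eqref{e:Gauss.vanish} fails $\Rightarrow$ Condition \ref{cond:vanish} fails'' is sound in outline, since it only uses the implication actually proved in Theorem \ref{thm:cluster} (Condition \ref{cond:vanish} $\Rightarrow$ $\|\BY(\bt)\|\to 0$ a.s.) in contrapositive form, combined with the tail-field representation. The gap is in the direction ``\eqref{e:Gauss.vanish} $\Rightarrow$ Condition \ref{cond:vanish}'': here you need the converse implication, namely that a.s.\ vanishing of the tail field forces the uniform control \eqref{eq:vanish}, and this is false for general regularly varying fields. The tail field only governs finite-dimensional limits, whereas \eqref{eq:vanish} concerns a maximum over the region $\rec_{\br_{\bn}}\backslash\rec_{M\ones}$, which grows with $\bn$; bridging that gap is exactly the anti-clustering content of the condition, so the step you defer to ``dominated behaviour should give the needed uniform smallness'' is the entire proof. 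A union bound, as you suggest, would require summability of $\sum_{\bt}\Prob(Y(\bt)>v)$, which is strictly stronger than \eqref{e:Gauss.vanish} and not established; the paper avoids this entirely by exploiting the exact max-stable identity, for which the difference of maxima is dominated by the integrable variable $V(\0)$ and converges a.s.\ to $V(\0)$ precisely when $V$ vanishes at infinity.

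A secondary issue: your proposed representation of the tail field should carry the variogram in the exponent, $Y(\bt)\eid Y(\0)\exp\{W(\bt)-W(\0)-\gamma(\bt)/2\}$, consistent with \eqref{eq:BR_Yt}; the normalization $\tfrac12(\sigma^2(\bt)-\sigma^2(\0))$ you wrote agrees with $\gamma(\bt)/2$ only after reducing to the case $W(\0)=0$. This is fixable (the law of the Brown--Resnick field depends only on $\gamma$), but as written the algebra identifying the exponent's divergence with \eqref{e:Gauss.vanish} silently uses this normalization, and for a general stationary-increment $W$ the discrepancy $\gamma(\bt)-\sigma^2(\bt)+\sigma^2(\0)=2\sigma^2(\0)-2\,\Cov(W(\bt),W(\0))$ can be of the same order as the fluctuations of $W(\bt)$, so the identification deserves an explicit argument.
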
 
\begin{proof}
Choose and fix the arrays $(u_{\bn}(\tau))$ and $(\br_{\bn})$. By the
inclusion-exclusion formula,
\begin{align*}
&\Prob\left(M_X(\rec_{\br_{\bn}} 
\backslash \rec_{M\ones}) > u_{\bn} (\tau)\mid
          \|\BX(\0)\|>u_{\bn}(\tau)\right) \\
&= 1- \frac{\Prob\left(M_X(\rec_{\br_{\bn}} 
\backslash \rec_{M\ones}\cup \{\0\}) > u_{\bn} (\tau)\right)
- \Prob\left(M_X(\rec_{\br_{\bn}} 
\backslash \rec_{M\ones}) > u_{\bn} (\tau)\right)}{\Prob\left(
  \|\BX(\0)\|>u_{\bn}(\tau)\right)}\,,
\end{align*}
so Condition \ref{cond:vanish} is satisfied if and only if 
\begin{align} \label{e:lim1}
\lim_{M \to \binfty}\liminf_{\bn \to \binfty}u_{\bn}(\tau)\left[
\Prob\left(M_X(\rec_{\br_{\bn}} 
\backslash \rec_{M\ones}\cup \{\0\}) > u_{\bn} (\tau)\right)
- \Prob\left(M_X(\rec_{\br_{\bn}} 
\backslash \rec_{M\ones}) > u_{\bn} (\tau)\right)\right]=1\,.
\end{align}
By \eqref{eq:X_dist}, as $\bn\to\binfty$, 
\begin{align*}
&\Prob\left(M_X(\rec_{\br_{\bn}} 
\backslash \rec_{M\ones}\cup \{\0\}) > u_{\bn} (\tau)\right)
- \Prob\left(M_X(\rec_{\br_{\bn}} 
\backslash \rec_{M\ones}) > u_{\bn} (\tau)\right) \\
=&\exp\left\{ -u_{\bn} (\tau)^{-1} \E\max_{\bt\in \rec_{\br_{\bn}} 
\backslash \rec_{M\ones}}V(\bt)\right\}-
\exp\left\{ -u_{\bn} (\tau)^{-1} \E\max_{\bt\in \rec_{\br_{\bn}} 
\backslash \rec_{M\ones}\cup\{\0\}}V(\bt)\right\}\\
\sim & u_{\bn} (\tau)^{-1} \left[ \E\max_{\bt\in \rec_{\br_{\bn}} 
\backslash \rec_{M\ones}\cup\{\0\}}V(\bt)-
\E\max_{\bt\in \rec_{\br_{\bn}} 
\backslash \rec_{M\ones}}V(\bt)\right]
\exp\left\{ -u_{\bn} (\tau)^{-1} \E\max_{\bt\in \rec_{\br_{\bn}} 
\backslash \rec_{M\ones}}V(\bt)\right\}\,.
\end{align*}
Since 
\begin{align*}
0& \leq u_{\bn} (\tau)^{-1} \E\max_{\bt\in \rec_{\br_{\bn}} 
\backslash \rec_{M\ones}}V(\bt) 
\leq u_{\bn} (\tau)^{-1} \sum_{\bt\in \rec_{\br_{\bn}} 
\backslash \rec_{M\ones}}\E V(\bt) \\
& = u_{\bn} (\tau)^{-1} {\rm Card}\bigl( \rec_{\br_{\bn}} 
\backslash \rec_{M\ones}\bigr)\to 0\,,
\end{align*}
\eqref{e:lim1} is equivalent to 
\begin{align} \label{e:lim1a}
\lim_{M \to \binfty}\liminf_{\bn \to \binfty} \, \E\bigl[ \max_{\bt\in \rec_{\br_{\bn}} 
\backslash \rec_{M\ones}\cup\{\0\}}V(\bt)-
\max_{\bt\in \rec_{\br_{\bn}} 
\backslash \rec_{M\ones}}V(\bt)\bigr]=1\,.
\end{align}

Suppose first that \eqref{e:Gauss.vanish} holds, i.e. that
$V(\bt)\to 0$ a.s. as $\bt\to\binfty$. Then 
\begin{align*} 
&\lim_{M \to \binfty}\liminf_{\bn \to \binfty} \, \E\bigl[ \max_{\bt\in \rec_{\br_{\bn}} 
\backslash \rec_{M\ones}\cup\{\0\}}V(\bt)-
\max_{\bt\in \rec_{\br_{\bn}} 
\backslash \rec_{M\ones}}V(\bt)\bigr]\\
= &\lim_{M \to \binfty} \E\bigl[ \max_{\bt\in (\rec_{M\ones})^c\cup\{\0\}}V(\bt)-
\max_{\bt\in (\rec_{M\ones})^c}V(\bt)\bigr] = \E V(\0)=1\,,
\end{align*}
so \eqref{e:lim1a} is satisfied. 

Suppose, on the other hand, that \eqref{e:Gauss.vanish} fails. Then
there is $a>0$ and an event $A$ of positive probability such that 
\begin{equation} \label{e:Aa}
\limsup_{\bt\to\binfty, \, \bt\in \Z^k} V(\bt)>a \ \ \text{on $A$.}
\end{equation}
Therefore, on $A$, for all $M$, 
\begin{align} \label{e:less.V}
\limsup_{\bn \to \binfty} 
\bigl[ \max_{\bt\in \rec_{\br_{\bn}} 
\backslash \rec_{M\ones}\cup\{\0\}}V(\bt)-
\max_{\bt\in \rec_{\br_{\bn}} 
\backslash \rec_{M\ones}}V(\bt)\bigr]\leq \max(V(\0),a)-a\,.
\end{align}
Since for every $\bn$ 
$$
\max_{\bt\in \rec_{\br_{\bn}} 
\backslash \rec_{M\ones}\cup\{\0\}}V(\bt)-
\max_{\bt\in \rec_{\br_{\bn}} 
\backslash \rec_{M\ones}}V(\bt)\leq V(\0)\,,
$$
an integrable random variable, we can use Fatou's lemma in the form
\begin{align*}
&\lim_{M \to \binfty}\liminf_{\bn \to \binfty} \, \E\bigl[ \max_{\bt\in \rec_{\br_{\bn}} 
\backslash \rec_{M\ones}\cup\{\0\}}V(\bt)-
\max_{\bt\in \rec_{\br_{\bn}} 
\backslash \rec_{M\ones}}V(\bt)\bigr]\\
\leq &\lim_{M \to \binfty}\limsup_{\bn \to \binfty} \, 
\E\bigl[ \max_{\bt\in \rec_{\br_{\bn}} 
\backslash \rec_{M\ones}\cup\{\0\}}V(\bt)-
\max_{\bt\in \rec_{\br_{\bn}} 
\backslash \rec_{M\ones}}V(\bt)\bigr]\\
\leq &\lim_{M \to \binfty}
\E \limsup_{\bn \to \binfty} \, \bigl[ \max_{\bt\in \rec_{\br_{\bn}} 
\backslash \rec_{M\ones}\cup\{\0\}}V(\bt)-
\max_{\bt\in \rec_{\br_{\bn}} 
\backslash \rec_{M\ones}}V(\bt)\bigr] \,.
\end{align*}
The upper limit inside the expectation cannot exceed $V(\0)$ and, by 
\eqref{e:less.V}, it is strictly smaller than $V(\0)$ on an event of a
positive probability. Therefore, 
$$
\lim_{M \to \binfty}\liminf_{\bn \to \binfty} \, \E\bigl[ \max_{\bt\in \rec_{\br_{\bn}} 
\backslash \rec_{M\ones}\cup\{\0\}}V(\bt)-
\max_{\bt\in \rec_{\br_{\bn}} 
\backslash \rec_{M\ones}}V(\bt)\bigr]<\E V(\0)=1\,,
$$
and \eqref{e:lim1a} fails. 
\end{proof}

Since the condition \eqref{e:Gauss.vanish} cannot be satisfied if the
Gaussian random field $(W(\bt): \bt \in \R^k)$ is stationary (and
nontrivial), Condition \ref{cond:vanish} is not satisfied for the
corresponding Brown-Resnick random field. Furthermore, denoting the
constant variance of the Gaussian field by $\sigma^2>0$, we have by \eqref
{eq:BR_Yt},
	\begin{align*}
	\Prob(Y(\bt) > 1) =  2\Phi\left( -\frac12\sqrt{\gamma(\bt)}
          \right) \ge 2\Phi(-\sigma). 
	\end{align*}
Therefore, the tail field does not necessarily vanish as $\bt \to
\binfty$, and the extremal clusters may last indefinitely. 

The following corollary is an immediate consequence of propositions
\ref{thm:BR_Y} and \ref{pr:BR.local}. 

\begin{corollary}\label{thm:BRRF_ei}
Let $\XRF$ be the Brown-Resnick random field \eqref{eq:BRRF} 
corresponding to a
stationary increment zero-mean Gaussian 
random field with variance $\sigma^2(\bt)$, satisfying 
\eqref{e:Gauss.vanish}. Then  the block extremal index
$\theta_{\rm b}$ exists, is positive and equal to $\theta_{\rm half}$ for
every invariant order  on $\Z^k$,  and can be computed by 
\begin{align}
	\theta_{\rm b} = \E\left[ \max_{\bt \preceq \0} \exp\left\{W(\bt) - \sigma^2(\bt)/2\right\}\right] - \E\left[\max_{\bt \prec \0} \exp\left\{W(\bt) - \sigma^2(\bt)/2\right\}  \right]. \label{eq:BRRF_theta}
	\end{align}
\end{corollary}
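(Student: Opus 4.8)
The plan is to chain the two cited propositions with Theorem~\ref{thm:cluster} and then to evaluate the half space extremal index explicitly from the finite-dimensional distributions of the tail field. First, by Proposition~\ref{pr:BR.local}, the hypothesis \eqref{e:Gauss.vanish} is precisely Condition~\ref{cond:vanish} for the random field $\XRF$. Theorem~\ref{thm:cluster} then guarantees at once that the block extremal index $\theta_{\rm b}$ exists, is positive, equals the half space extremal index $\theta_{\rm half}$ for every invariant order $\prec$ on $\Z^k$, and that the tail field obeys $Y(\bt)\to 0$ a.s. as $\|\bt\|_{\infty}\to\infty$. Since the field is scalar and positive, $\|Y(\bt)\|=Y(\bt)$, so it suffices to show that, writing $V(\bt)=\exp\{W(\bt)-\sigma^2(\bt)/2\}$,
\[
\theta_{\rm half}=\Prob\Bigl(\sup_{\bt\prec\0}Y(\bt)\le 1\Bigr)=\E\Bigl[\max_{\bt\preceq\0}V(\bt)-\max_{\bt\prec\0}V(\bt)\Bigr],
\]
which is exactly \eqref{eq:BRRF_theta}.

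Next I would approximate the half space by finite windows. Fix an increasing sequence of finite sets $S_n\uparrow\{\bt\in\Z^k:\bt\prec\0\}$. Then $\{\sup_{\bt\prec\0}Y(\bt)\le 1\}=\bigcap_n\{\max_{\bt\in S_n}Y(\bt)\le 1\}$, so continuity of the probability measure from above gives $\theta_{\rm half}=\lim_{n\to\infty}\Prob(\max_{\bt\in S_n}Y(\bt)\le 1)$. For each fixed $n$, Proposition~\ref{thm:BR_Y} applied with all thresholds equal to $1$ (that is, \eqref{eq:BR_Y_dist} with every $y_i=1$) yields
\[
\Prob\Bigl(\max_{\bt\in S_n}Y(\bt)\le 1\Bigr)=\E\Bigl[\max_{\bt\in S_n\cup\{\0\}}V(\bt)-\max_{\bt\in S_n}V(\bt)\Bigr],
\]
where both maxima are over finite sets and hence integrable. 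The crucial structural point is to keep the difference \emph{inside} a single expectation at this stage.

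Finally I would pass to the limit inside the expectation. By \eqref{e:Gauss.vanish} we have $V(\bt)\to 0$ a.s., so only finitely many values exceed any level; hence $\max_{\bt\in S_n}V(\bt)\uparrow\max_{\bt\prec\0}V(\bt)<\infty$ and $\max_{\bt\in S_n\cup\{\0\}}V(\bt)\uparrow\max_{\bt\preceq\0}V(\bt)<\infty$ almost surely. The integrand above equals either $0$ or $V(\0)-\max_{\bt\in S_n}V(\bt)$, so it is nonnegative and dominated by the integrable variable $V(\0)$, with $\E V(\0)=1$. Dominated convergence then produces $\theta_{\rm half}=\E[\max_{\bt\preceq\0}V(\bt)-\max_{\bt\prec\0}V(\bt)]$, which is \eqref{eq:BRRF_theta} after substituting $V(\bt)=\exp\{W(\bt)-\sigma^2(\bt)/2\}$.

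The main obstacle is precisely this limit interchange, together with the fact that the answer must be understood as the expectation of the difference of the two maxima rather than as a difference of two separate expectations: the quantities $\E[\max_{\bt\preceq\0}V(\bt)]$ and $\E[\max_{\bt\prec\0}V(\bt)]$ need not be finite (for a variogram growing linearly in $\bt$ a one-sided maximum of $V$ already has an infinite mean), so splitting would give a meaningless $\infty-\infty$. The telescoping of the difference to $(V(\0)-\max_{\bt\prec\0}V(\bt))^{+}\le V(\0)$ is what supplies the integrable dominating function, rescues the passage to the limit, and simultaneously confirms $\theta_{\rm half}\in(0,1]$.
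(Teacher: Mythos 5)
Your argument follows the same chain as the paper's own proof: Proposition \ref{pr:BR.local} converts \eqref{e:Gauss.vanish} into Condition \ref{cond:vanish}, Theorem \ref{thm:cluster} then yields existence, positivity and $\theta_{\rm b}=\theta_{\rm half}$ for every invariant order, and the value of $\theta_{\rm half}=\Prob(\max_{\bt\prec\0}Y(\bt)\le 1)$ is read off from the finite-dimensional distributions \eqref{eq:BR_Y_dist} of the tail field. The paper does exactly this, in three lines, leaving the final computation implicit.

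Where you differ is in how the last step is carried out, and your extra care is substantive rather than cosmetic. The paper writes the answer, both in \eqref{eq:BRRF_theta} and in its proof, as a difference of two expectations of maxima over infinite index sets. As you point out, those two expectations need not be finite separately; indeed, writing $V(\bt)=\exp\{W(\bt)-\sigma^2(\bt)/2\}$ and optimizing the lognormal tail $\Prob(V(\bt)>x)$ over $\bt$, one finds for the additive fractional Brownian motion of Example \ref{ex:fbs} that $\Prob\bigl(\max_{\bt\prec\0}V(\bt)>x\bigr)\ge c/(x\sqrt{\ln x})$ for large $x$, so both $\E[\max_{\bt\preceq\0}V(\bt)]$ and $\E[\max_{\bt\prec\0}V(\bt)]$ are infinite and \eqref{eq:BRRF_theta} as literally printed is $\infty-\infty$ in the paper's own main example. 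Your version keeps the difference inside a single expectation: apply \eqref{eq:BR_Y_dist} with all $y_i=1$ on finite windows $S_n\uparrow\{\bt\prec\0\}$ (where the split into two finite expectations is harmless), use continuity from above on the left-hand side, and pass to the limit on the right via dominated convergence, the integrand being $\bigl(V(\0)-\max_{\bt\in S_n}V(\bt)\bigr)^{+}\le V(\0)$ with $\E V(\0)=1$, and the two maxima converging monotonically by \eqref{e:Gauss.vanish}. This yields $\theta_{\rm b}=\E\bigl[\max_{\bt\preceq\0}V(\bt)-\max_{\bt\prec\0}V(\bt)\bigr]$, which is the correct reading of \eqref{eq:BRRF_theta}. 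So: same route as the paper, but your rendering of the final formula is the one that is actually well defined in the cases of interest, and the paper's statement should be understood (or restated) in your form.
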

\begin{proof}
By Proposition \ref{pr:BR.local},  Condition \ref{cond:vanish} is
satisfied. By Theorem \ref{thm:cluster}  the block extremal index
$\theta_{\rm b}$ exists, is positive and, using \eqref{eq:BR_Y_dist}, 
	\begin{align*}
	\theta_{\rm b} &= \theta_{\rm half} = \Prob \left(\max_{\bt
                         \prec \0} Y(\bt) \le 1\right)\\ 
	& = \E\left[ \max_{\bt \preceq \0} \exp\left\{W(\bt) - \sigma^2(\bt)/2\right\}\right] - \E\left[\max_{\bt \prec \0} \exp\left\{W(\bt) - \sigma^2(\bt)/2\right\}  \right].
	\end{align*}
\end{proof}

Since an exact simulation of Brown-Resnick random fields is not easy
(see
e.g. \cite{dieker:mikosch:2015,oesting:kabluchko:schlather:2012}),
results of the type \eqref{eq:BRRF_theta} can be used for numerical
evaluation of the extremal index of the field.  We demonstrate this on
an example. 

\begin{example}[Brown-Resnick field corresponding to the additive
  Fractional Brownian motion] \label{ex:fbs}   
Recall that the standard Fractional Brownian motion with Hurst
paremeter $0<H<1$ is a stationary increment zero-mean Gaussian
process on $\R$, vanishing at the origin, with the variogram
$\gamma(t) = |t|^{2H}, \, t\in \R$. 
	Let $\fBm_{H_i}(t) \in \R$, $i=1,\ldots, k$ be independent
        standard 
        Fractional Brownian motions, with respective  Hurst parameters $H_1,
        \ldots, H_k$. Then  
	\begin{align*}
	W(t_1,\ldots, t_k) = \fBm_{H_1}(t_1) +\cdots +
          \fBm_{H_k}(t_k), \, \bt=(t_1,\ldots, t_k)\in \R^k
	\end{align*} 
is a zero mean stationary increment Gaussian random field, the additive
  Fractional Brownian motion.  It is elementary to check that each
  standard Fractional Brownian motion satisfies \eqref{e:Gauss.vanish}
  (this follows, for example, by the Borel-Cantelli lemma). Therefore,
  so does the additive   Fractional Brownian motion. 

For $k=2$ we have used \eqref{eq:BRRF_theta} to calculate the block
extremal index of the Brown-Resnick random field corresponding to the
additive  Fractional Brownian motion. In this calculation we
truncated the domain of the additive  Fractional Brownian motion to
the square $[-200,200]\times [-200,200]$. The results are plotted on
Figure \ref{fig:fbmsim}  as a function of $H_1$ and $H_2$. 

	\begin{figure}[h]
		\includegraphics[width=0.7\textwidth]{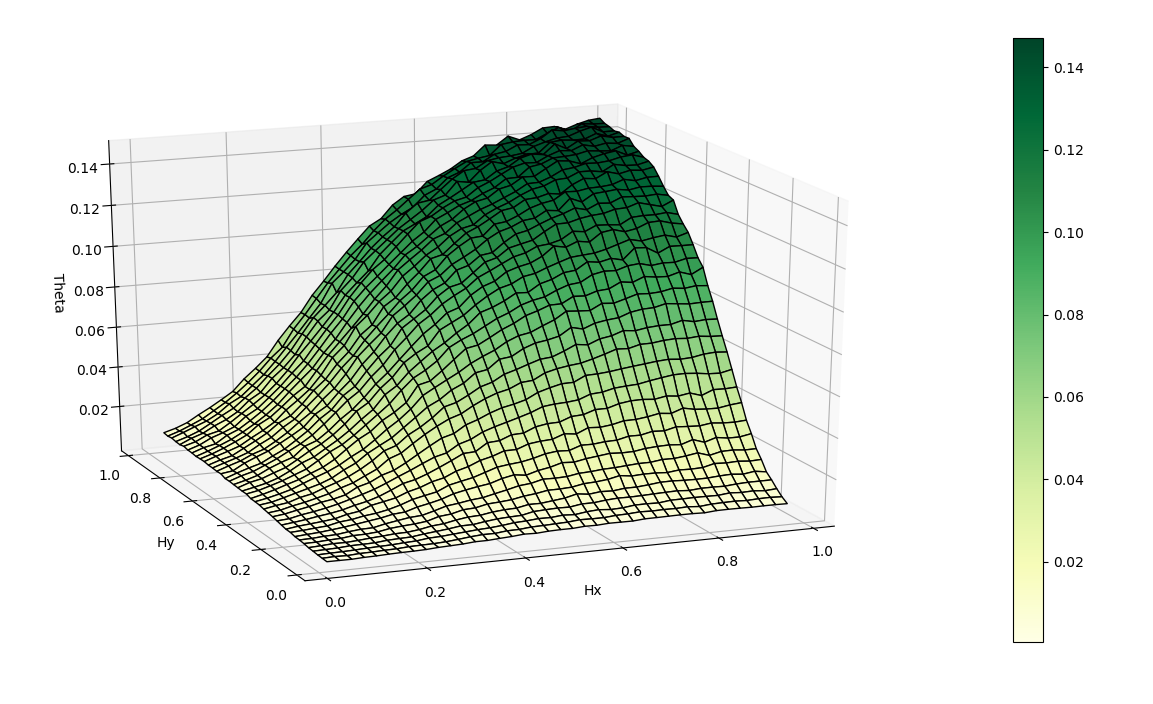}
		\caption{Extremal index vs. Hurst parameters}
		\label{fig:fbmsim}
	\end{figure}
	
	Figure \ref{fig:fbmsim} shows a positive relationship between
$\theta_{\rm b}$ and the Hurst parameters. This can be understood by
noticing that, the smaller is the Hurst parameter, the slower is the
variance increasing, the closer is the Fractional Brownian
motion to the case of a constant variance, i.e. of stationarity. As we
are discussing above, when the Gaussian random field is stationary,
the extremal clusters of the corresponding Brown-Resnick random field
can be very large.  
\end{example}

\section{Acknowledgements}

We are very grateful to Hrvoje Planini\'c for his careful reading of the
paper and pointing out to us the numerous flaws present in a
preliminary version. 

\medskip


\end{document}